\newcommand{\HH}{\mathbb{H}}
\newcommand{\NN}{\mathbb{N}}
\newcommand{\RR}{\mathbb{R}}
\numberwithin{equation}{section}
\theoremstyle{plain}
\newtheorem{theorem}[equation]{Theorem}
\newtheorem{lemma}[equation]{Lemma}
\newtheorem{definition}[equation]{Definition}
\newcommand{\dint}{\text{\normalfont{d}}} 
\DeclarePairedDelimiter{\norm}{\lVert}{\rVert}
\begin{document}
	
	\title[Sharp estimates for Jacobi heat kernels in conic domains]{Sharp estimates for Jacobi heat kernels in conic domains}
	
	\author{Dawid Hanrahan}
	\address{Dawid Hanrahan (\textnormal{dawid.hanrahan@pwr.edu.pl})
		\newline WUST -- Wroc\l aw University of Science and Technology, 50-370 Wroc\l aw, Poland}		
	\author{Dariusz Kosz}
	\address{Dariusz Kosz (\textnormal{dkosz@bcamath.org.pl})
		\newline BCAM -- Basque Center for Applied Mathematics, 48009 Bilbao, Spain \newline WUST -- Wroc\l aw University of Science and Technology, 50-370 Wroc\l aw, Poland}		
	
	\begin{abstract} We prove genuinely sharp estimates for the Jacobi heat kernels introduced in the context of the multidimensional cone $\mathbb{V}^{d+1}$ and its surface $\mathbb{V}^{d+1}_0$. To do so, we combine the theory of Jacobi polynomials on the cone explored by Xu with the recent techniques by Nowak, Sj\"ogren, and Szarek, developed to find genuinely sharp estimates for the spherical heat kernel.     
	
	\smallskip	
	\noindent \textbf{2020 Mathematics Subject Classification:} Primary 35K08, 33C50.
	
	\smallskip
	\noindent \textbf{Key words:} multidimensional cone, Jacobi heat kernel.
	\end{abstract}
	
\maketitle

\section{Introduction}

Heat kernels are important objects in mathematics and physics. Despite hundreds of articles devoted to studying them, it was only very recently that the development of techniques allowed the so-called genuinely sharp estimates to be given in settings other than a few classical ones such as the hyperbolic space $\HH^{d+1}$, see \cite{DM88}. For the spherical heat kernel genuinely sharp estimates were obtained in \cite{NSS18}, while the Jacobi heat kernels for all compact rank-one symmetric spaces, including the classical domain $[-1,1]$, were investigated in \cite{NSS21}, with the aid of some tools elaborated earlier in \cite{NS13}. 

The aim of this article is to find genuinely sharp estimates for the Jacobi heat kernels on the multidimensional cone and its surface introduced by Xu in \cite{Xu20}, see Theorems~\ref{T1}~and~\ref{T2}. In the proofs we make use of the ideas invented in \cite{NSS21}. 
Let us also explain that ``genuinely sharp'' means that  
the exact expressions which control the heat kernels simultaneously from above and below are given. We emphasize that so far such a high level of precision has only been achieved in very few settings. In addition to the previously mentioned articles, we also refer the reader to \cite{BM16, MSZ16, MS20, Se22}.

Before stating our results, let us describe a general context in which heat kernels arise. In the next paragraph we follow \cite[Section~3]{DX14} and \cite[Section~1]{Xu20}. 

Given an appropriate weight $\varpi$ defined on a domain $\Omega \subset \RR^d$, one can construct an orthogonal basis of polynomials in $\mathcal L^2(\Omega, \varpi)$. For each $n \in \{0,1,\dots\}$, let $\mathcal V_n(\varpi)$ be the subspace of $\mathcal L^2(\Omega, \varpi)$ spanned by all basis vectors that are polynomials of degree $n$. Then the orthogonal projection ${\rm proj}_n \colon \mathcal L^2(\Omega, \varpi) \to \mathcal V_n(\varpi)$ turns out to be of the form
\begin{equation*}
{\rm proj}_n f(x) \coloneqq \int_\Omega f(y) P_n(\varpi; x,y) \, \dint \varpi(y)
\end{equation*}
for some integral kernel $P_n(\varpi; \cdot, \cdot) \colon \Omega^2 \to \RR$ called the reproducing kernel of $\mathcal V_n(\varpi)$. Among many possible choices of $(\Omega,\varpi)$, we focus on these for which the following two properties hold.
\begin{enumerate} \itemsep=0.1cm
\item \label{item1} There exists a linear second order differential operator $\mathcal  D$ such that all basis polynomials are its eigenfunctions with eigenvalues depending only on the degrees $n$.
We call $\mathcal D$ a~diffusion operator. Its domain consists of all $f \in \mathcal L^2(\Omega, \varpi)$ for which $\mathcal D f$, defined formally by using orthogonal expansions, can be identified with elements of $\mathcal L^2(\Omega, \varpi)$.
\item \label{item2} Each $P_n(\varpi, \cdot, \cdot)$ has a ``computable'' closed-form formula.
\end{enumerate}
If \eqref{item1} holds, then for each $\tau \in (0, \infty)$ one can define the associated heat kernel by
\begin{equation*}
h_{\tau}(\varpi;x,y)    
\coloneqq \sum \limits_{n=0}^{\infty} e^{- \tau \lambda_n^{\mathcal D}} \, P_{n}(\varpi;x,y)
\end{equation*}
with $\lambda_n^{\mathcal D}$ being the eigenvalues of $\mathcal D$ corresponding to $\mathcal V_n(\varpi)$. Informally speaking, $h_\tau(\varpi; x,y)$ measures the heat flow between $x$ and $y$ in time $\tau$, when $\mathcal D$ describes heat diffusion. It thus can be used to produce the solution to an initial value problem for the related heat equation. If \eqref{item2} holds, then one can hope to find precise estimates for the size of $h_{\tau}(\varpi;x,y)$ for given $x,y,\tau$. 

In principle, situations in which both \eqref{item1} and \eqref{item2} occur are very rare. In \cite{Xu20} the studied domains were the cone $\mathbb V^{d+1}$ and its surface $\mathbb V^{d+1}_0$ with any given $d \in \{2,3, \dots \}$, see \eqref{def1} and \eqref{def2} for the definitions. In both cases Xu was able to find suitable weights $\varpi$ allowing the two properties to happen, and gave the formulas for the associated heat kernels. The author named the latter objects Jacobi heat kernels because of their clear association with the classical Jacobi setting, which in turn was due to the particular form of $\varpi$.

In the following subsections we recall some parts of the theory developed in \cite{Xu20}. It is worth mentioning that in order to find $\varpi$ several other objects, such as the spherical harmonics or the classical Jacobi polynomials on $[-1,1]$, were used. This resembles the fact that multidimensional cones inherit geometrical properties of both intervals and Euclidean balls. Xu presented a very detailed approach to the subject. In particular, simpler settings -- intervals, triangles, and balls -- were considered first, see \cite[Section~2]{Xu20}, and only then suitable orthogonal polynomials and the associated differential operators on $\mathbb{V}^{d+1}$ and $\mathbb{V}^{d+1}_0$ were defined. We did not want to repeat this content line by line so only the most important formulas, from the standpoint of our results, are collected. For more detailed information or intuitions behind the formulas we refer the reader to \cite{Xu20} or to the books \cite{DX13, DX14}. 
   
\subsection{Jacobi heat kernel on $\mathbb{V}^{d+1}$} This material comes from \cite[Sections~3~and~4]{Xu20}, where one should specify $\beta=0$\footnote{In both settings the additional parameter $\beta$ corresponds to the factor $t^\beta$ in the weight. In this article it is fixed due to the reasons mentioned in \ref{d} in Subsection~\ref{S1.3}.}. 
Given $d \in \{2,3, \dots\}$, consider the domain
\begin{align} \label{def1}
\mathbb{V}^{d+1} \coloneqq \big\{(x, t) \in \mathbb{R}^{d} \times \mathbb{R} :  \norm{x} \le t, \, t \in [0,1] \big\}
\end{align}
contained in $\RR^{d+1}$, equipped with the weight
\begin{equation*}
W_{\mu, \gamma}(x, t) \coloneqq (t^2 - \norm{x}^2)^{\mu - \frac{1}{2}} (1 - t)^{\gamma},
\end{equation*}
where $\mu \in (- \frac{1}{2}, \infty), \gamma \in (-1, \infty)$ are fixed parameters.
Then, for each $n \in \{0,1, \dots\}$, the space  $\mathcal{V}_{n}(W_{\mu, \gamma})$ of orthogonal polynomials of degree $n$ related to $W_{\mu, \gamma}$, is described in terms of the so-called Jacobi polynomials on the cone. Moreover, there exists a suitable operator $\mathcal D_{\mu, \gamma}$ acting on a subspace of $\mathcal L^2(\mathbb{V}^{d+1}, c_{\mu, \gamma} W_{\mu, \gamma})$, where $c_{\mu, \gamma}$ is the normalizing constant\footnote{That is, $c_{\mu, \gamma}$ is the unique constant for which $c_{\mu, \gamma} W_{\mu, \gamma}$ becomes a probability measure on $\mathbb{V}^{d+1}$.}, with all $\mathcal{V}_{n}(W_{\mu, \gamma})$ being its eigenspaces. The associated eigenvalues equal $-n(n+2\mu + \gamma + d)$.

We do not use the formula for $\mathcal D_{\mu, \gamma}$ later on but for the sake of completeness we recall that 
\begin{align*}
\mathcal D_{\mu, \gamma} & \coloneqq  
t (1-t) \partial_t^2 + 2(1-t) \langle x, \nabla_x \rangle \partial_t
+ \sum_{i=1}^d (t - x_i^2) \partial_{x_i}^2
- 2 \sum_{i < j} x_i x_j \partial_{x_i} \partial_{x_j} \\ 
& \quad + (2 \mu + d) \partial_{t} - (2 \mu  + \gamma + d + 1)
(\langle x, \nabla_x \rangle + t \partial_t ),
\end{align*}
where $\nabla_x$ is the gradient in $x$, see \cite[Theorem~3.2]{Xu20}.

If $\mu \geq 0$ and $\gamma \geq -\frac{1}{2}$, then $P_{n} (W_{\mu, \gamma};(x, t), (y, s))$, the reproducing kernel of $\mathcal{V}_{n}(W_{\mu, \gamma})$, is given by the following integral 
(cf.~\cite[(4.9)]{Xu20})    
\begin{align*} \label{RK1}
P_{n}\big(W_{\mu, \gamma};(x, t), (y, s)\big) \coloneqq \int \limits_{[-1, 1]^3} Z_{2n}^{2\alpha + \gamma + 1} \big(\xi(x, t, y, s;u, v_1, v_2)\big)  
\, \dint \Pi_{\mu - \frac{1}{2}}(u) \dint \Pi_{\alpha - \frac{1}{2}}(v_1) \dint \Pi_{\gamma}(v_2).
\end{align*} 
Here $\dint \Pi_{\nu}(w) \coloneqq c_\nu (1-w^2)^{\nu - \frac{1}{2}} \, \dint w$ for $\nu \in (-\frac{1}{2}, \infty)$ and $w \in [-1,1]$, where $c_\nu$ is the normalizing constant, while $\Pi_{-\frac{1}{2}}$ is the mean of Dirac deltas $\frac{1}{2}(\delta_{-1} + \delta_{1})$. Also, $\alpha \coloneqq \mu + \frac{d - 1}{2}$ and
\begin{align*}
\xi(x, t, y, s;u, v_1,v_2) \coloneqq v_1 \sqrt{\tfrac{1}{2}\big(st + \langle x, y \rangle + \sqrt{t^2 - \norm{x}^2} \sqrt{s^2 - \norm{y}^2}u \big)} 
+ v_2 \sqrt{1-t} \sqrt{1-s}.
\end{align*}
We note that $|\xi(x, t, y, s;u, v_1,v_2)| \leq 1$, as shown in the end of the proof of \cite[Theorem~4.3]{Xu20}. Finally, for $n \in \NN_0$, $\lambda \in [0, \infty)$, and $w \in [-1,1]$ we also use the special function\footnote{Here we only need $\lambda \in [1, \infty)$ but the wider range will be used in the context of $\mathbb V_0^{d+1}$.}
\begin{equation} \label{Gegen}
Z_{n}^{\lambda + \frac{1}{2}}(w) \coloneqq \frac{C_{n}^{\lambda + \frac{1}{2}}(1) C_{n}^{\lambda + \frac{1}{2}}(w)}{h^{\lambda + \frac{1}{2}}_n}  = \frac{P_{n}^{\lambda, \lambda}(1)P_{n}^{\lambda, \lambda}(w)}{h_{n}^{\lambda, \lambda}}.
\end{equation}
Here $C_{n}^{\lambda + \frac{1}{2}}$ is the Gegenbauer polynomial of degree $n$, and  
$P_n^{\lambda, \lambda}$ is the classical Jacobi polynomial on $[-1,1]$, while $h^{\lambda + \frac{1}{2}}_n$ and $h_n^{\lambda, \lambda}$ are the squares of their norms in the space $\mathcal L^2 ([-1,1], \dint \Pi_{\lambda + \frac{1}{2}} )$. In \cite[Subsection~2.2]{Xu20} the  function $Z_{n}^{\lambda + \frac{1}{2}}$ is defined through the Gegenbauer polynomial but here we will use the last expression in \eqref{Gegen} instead. The two expressions in \eqref{Gegen} are equal because $P_n^{\lambda, \lambda}$ is a constant multiple of $C_{n}^{\lambda + \frac{1}{2}}$, see \cite[(4.7.1)]{Sz75}. We recall that $P_n^{\lambda, \lambda}$, and hence also $Z_n^{\lambda + \frac{1}{2}}$, is of the same parity as $n$.

\begin{definition}
	\label{HK1}
	Let $\mu \in [0, \infty)$, $\gamma \in [-\frac{1}{2}, \infty)$, and $\alpha \coloneqq \mu + \frac{d - 1}{2}$. Then, for each $\tau \in (0, \infty)$, the associated Jacobi heat kernel on $\mathbb{V}^{d+1}$ is given by
	\begin{equation*}
	h_{\tau} \big(W_{\mu, \gamma};(x, t), (y, s) \big) 
	\coloneqq \sum \limits_{n=0}^{\infty} e^{- \tau n (n + 2\mu + \gamma + d)} \, P_{n}\big(W_{\mu, \gamma};(x, t), (y, s)\big).
	\end{equation*}
\end{definition} 

In \cite[Definition~5.1]{Xu21} the following distance function on $\mathbb{V}^{d+1}$ was introduced
\begin{equation*}
{\rm dist}_{\mathbb{V}^{d+1}} \big((x,t),(y,s) \big) \coloneqq
\arccos \Big( \sqrt{\tfrac{1}{2} \big( st + \langle x, y \rangle + \sqrt{t^2 - \norm{x}^2} \sqrt{s^2 - \norm{y}^2} \big) } + \sqrt{1 - t} \sqrt{1 - s} \Big).
\end{equation*}
According to this, for the Jacobi heat kernel on $\mathbb{V}^{d+1}$ we shall show the following result. 

\begin{theorem} \label{T1} 
	Let $\mu \in [0, \infty), \gamma \in [-\frac{1}{2}, \infty)$, and $\alpha \coloneqq \mu + \frac{d - 1}{2}$. Then $h_{\tau} (W_{\mu, \gamma};(x, t), (y, s) )$, the Jacobi heat kernel on $\mathbb{V}^{d+1}$, is comparable to 
	\begin{align*} 
	&\tau^{-\alpha + \mu - 1} \big( \sqrt{1-t} \sqrt{1-s} \vee \tau \big)^{-\gamma - \frac{1}{2}} \big( \sqrt{st + \langle x, y \rangle} \vee \tau \big)^{-\alpha} \\
	&\times \Big( \sqrt{ (t^2 - \norm{x}^2)(s^2 - \norm{y}^2) (st + \langle x, y \rangle)^{-1}} \vee \tau \Big)^{-\mu}  \exp \big\{ -\tfrac{1}{\tau} {\rm dist}^2_{\mathbb{V}^{d+1}} \big((x,t),(y,s) \big) \big\}
	\end{align*}
	for $\tau \in (0, 4]$, and to $1$ for $\tau \in [4, \infty)$, uniformly in $(x, t), (y, s) \in \mathbb{V}^{d+1}$.
\end{theorem}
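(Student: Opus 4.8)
The plan is to transfer the estimate to the classical Jacobi heat kernel on $[-1,1]$, for which genuinely sharp bounds were obtained in \cite{NSS21} (see also \cite{NS13}), and then to analyse by a Laplace--type argument the triple integral defining the reproducing kernels $P_n(W_{\mu,\gamma};\cdot,\cdot)$. Put $\lambda\coloneqq2\alpha+\gamma+\tfrac{1}{2}$, so that the upper index in \eqref{Gegen} equals $\lambda+\tfrac{1}{2}$ and the exponent $n(n+2\mu+\gamma+d)$ in Definition~\ref{HK1} satisfies $n(n+2\mu+\gamma+d)=n(n+\lambda+\tfrac{1}{2})=\tfrac{1}{4}\,(2n)(2n+2\lambda+1)$. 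Since $|Z_m^{\lambda+\frac{1}{2}}(w)|\le Z_m^{\lambda+\frac{1}{2}}(1)$ (because $|P_m^{\lambda,\lambda}|$ is largest at $1$), $Z_m^{\lambda+\frac{1}{2}}(1)\lesssim m^{2\lambda+1}$, and $|\xi|\le1$, the defining series may be integrated term by term; writing $\mathcal G_\tau(w)\coloneqq\sum_{n=0}^\infty e^{-\tau n(n+\lambda+\frac{1}{2})}Z_{2n}^{\lambda+\frac{1}{2}}(w)$, we obtain for every $\tau\in(0,\infty)$
\begin{equation*}
h_\tau(W_{\mu,\gamma};(x,t),(y,s))=\int_{[-1,1]^3}\mathcal G_\tau(\xi)\,\dint\Pi_{\mu-\frac{1}{2}}(u)\,\dint\Pi_{\alpha-\frac{1}{2}}(v_1)\,\dint\Pi_{\gamma}(v_2).
\end{equation*}
As $Z_m^{\lambda+\frac{1}{2}}$ has the parity of $m$, the function $\mathcal G_\tau$ is the even part (in $w$) of the Gegenbauer---i.e.\ symmetric Jacobi---heat kernel on $[-1,1]$ with parameters $(\lambda,\lambda)$ at time $\tau/4$, with one spatial variable frozen at $1$; equivalently, by the quadratic transformation \cite[(4.1.5)]{Sz75}, it is the Jacobi heat kernel with parameters $(\lambda,-\tfrac{1}{2})$ at time $\tau$ evaluated at $(1,2w^2-1)$. (The rescaling by $\tfrac14$ is the source of the threshold $4$.) Since $\arccos|w|\le\tfrac{\pi}{2}$, the endpoint $-1$ never comes into play, and the sharp estimate of \cite{NSS21} gives
\begin{equation*}
\mathcal G_\tau(w)\asymp\tau^{-\lambda-1}\exp\!\Big(-\tfrac{1}{\tau}\big(\arccos|w|\big)^2\Big),\qquad w\in[-1,1],\ \tau\in(0,4].
\end{equation*}

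\emph{Geometry of $\xi$.} Set $a\coloneqq st+\langle x,y\rangle$, $b\coloneqq\sqrt{(t^2-\norm{x}^2)(s^2-\norm{y}^2)}$, $c\coloneqq\sqrt{1-t}\sqrt{1-s}$ and $A_\pm\coloneqq\sqrt{\tfrac{1}{2}(a\pm b)}$. Elementary inequalities---and the nonnegativity of $\tfrac{1}{2}(a+bu)$ for $u\in[-1,1]$, implicit in the formula for $P_n$---give $0\le b\le a\le2$, $c\le1$, $A_+\asymp\sqrt a$, and $b/A_+\le A_+$; moreover $\xi=v_1\sqrt{A_+^2-\tfrac{b}{2}(1-u)}+v_2c$ and $\cos\theta=A_++c$, where $\theta\coloneqq{\rm dist}_{\mathbb V^{d+1}}((x,t),(y,s))\in[0,\tfrac{\pi}{2}]$, so that $|\xi|\le\cos\theta$. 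Two comparisons drive the analysis. First, since $s\mapsto(\arccos s)^2$ is decreasing on $[0,1]$ with derivative of modulus lying between $2$ and $\pi$, one has the \emph{global} relation $\big(\arccos|\xi|\big)^2\asymp\theta^2+\big(\cos\theta-|\xi|\big)$. Second, on the fixed corner neighbourhood $N\coloneqq\{u,v_1,v_2\ge\tfrac{1}{2}\}$, writing $p=1-v_1$, $q=1-v_2$, $r=1-u$, one checks that $\cos\theta-\xi\asymp A_+p+cq+\tfrac{b}{A_+}r$. Finally, the map $(v_1,v_2)\mapsto(-v_1,-v_2)$ sends $\xi\mapsto-\xi$ and leaves $\mathcal G_\tau$ and the three measures invariant, so the corners $(1,1,1)$ and $(1,-1,-1)$ contribute equally; hence it suffices to estimate $2$ times the integral over $N$ (this yields the main term), together with the integral over $[-1,1]^3\setminus(N\cup N')$, $N'$ being the analogous neighbourhood at $(1,-1,-1)$ (needed only for the upper bound).

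\emph{Evaluating the integral; large $\tau$.} On $N$ (where $\xi\ge0$), combining the two comparisons gives $\mathcal G_\tau(\xi)\asymp\tau^{-\lambda-1}e^{-\theta^2/\tau}\exp\!\big(-(A_+p+cq+\tfrac{b}{A_+}r)/(c'\tau)\big)$---a one--sided inequality, with $c'$ chosen appropriately for the upper and for the lower bound---while the three measures are comparable on $N$ to $p^{\alpha-1}\,\dint p$, $q^{\gamma-\frac{1}{2}}\,\dint q$, $r^{\mu-1}\,\dint r$; the measures $\dint\Pi_\gamma$, $\dint\Pi_{\mu-\frac{1}{2}}$ equal $\tfrac{1}{2}(\delta_{-1}+\delta_1)$ precisely for $\gamma=-\tfrac{1}{2}$, respectively $\mu=0$, where the corresponding integration degenerates to evaluation at the corner, in agreement with the factor $(\,\cdot\,\vee\tau)^{-\gamma-\frac{1}{2}}$, respectively $(\,\cdot\,\vee\tau)^{-\mu}$, collapsing to $1$. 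The integral factorises, and the elementary asymptotics $\int_0^{c_0}p^{\beta}e^{-Mp}\,\dint p\asymp(M\vee1)^{-\beta-1}$ (valid for $\beta>-1$, $c_0\asymp1$) yield the three factors $\tau^{\alpha}(A_+\vee\tau)^{-\alpha}$, $\tau^{\gamma+\frac{1}{2}}(c\vee\tau)^{-\gamma-\frac{1}{2}}$ and $\tau^{\mu}((b/A_+)\vee\tau)^{-\mu}$. Multiplying by $\tau^{-\lambda-1}e^{-\theta^2/\tau}$, simplifying the power of $\tau$ via $-\lambda-1+\alpha+\gamma+\tfrac{1}{2}+\mu=-\alpha+\mu-1$, and using $A_+\asymp\sqrt a$, $b/A_+\asymp\sqrt{b^2/a}$, one recovers exactly the asserted expression. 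For the upper bound it remains to check that $[-1,1]^3\setminus(N\cup N')$ contributes no more: there $\cos\theta-|\xi|$ exceeds a fixed multiple of one of $\sqrt a$, $c$, $b/\sqrt a$ (according to which of the three variables is not near $\pm1$), so, exponential decay beating polynomial growth, the crude bound $\tau^{-\lambda-1}e^{-\theta^2/\tau}e^{-m/(c'\tau)}$ with $m=\min\{\sqrt a,\,c,\,b/\sqrt a\}$ is dominated by the main term. Finally, for $\tau\ge4$ one isolates the term $n=0$, which equals $P_0(W_{\mu,\gamma};\cdot,\cdot)\equiv1$, and bounds the tail by $\sum_{n\ge1}e^{-\tau n(n+2\mu+\gamma+d)}Z_{2n}^{\lambda+\frac{1}{2}}(1)=O(e^{-c\tau})$, whence $h_\tau\asymp1$.

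\emph{The main obstacle.} The substantive effort is concentrated in the geometric step and in the treatment of the remainder: proving the two comparisons and running the attendant case analysis \emph{uniformly} over $\mathbb V^{d+1}\times\mathbb V^{d+1}$, and in particular across the degenerate regimes $a\to0$ (which forces $b\to0$), $b\to a$ (where $A_-\to0$ and $\xi$ becomes nearly independent of $u$ close to $u=-1$), $c\to0$, and $\theta\to\tfrac{\pi}{2}$. This is precisely where the techniques of \cite{NSS21}, originally developed for the sphere, must be adapted to the more intricate geometry of the cone; once those comparisons are secured, what remains are the elementary integral asymptotics indicated above.
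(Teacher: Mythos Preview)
Your reduction to the classical symmetric Jacobi heat kernel $G_{\tau/4}^{\lambda,\lambda}(1,\cdot)$ is exactly the content of Lemma~\ref{LHK1}, and your pointwise estimate $\mathcal G_\tau(w)\asymp\tau^{-\lambda-1}\exp\{-(\arccos|w|)^2/\tau\}$ is equivalent to Lemma~\ref{LNSS3} combined with Lemma~\ref{LNSS4}. From that point on the paper proceeds differently. Rather than localising to a corner and running a three--variable Laplace analysis, it applies the one--variable integral estimate of \cite[Lemma~2.1]{NSS21} (recorded here as Lemma~\ref{LNSS1}) \emph{iteratively}: first in $v_2$, then in $v_1$, and finally---after the substitution $\cos\psi=\sqrt{A+Bu}+E$---in $u$ via the variant Lemma~\ref{LNSS2}. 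Each application consumes the full range of one variable and returns a sharp two--sided estimate in closed form, so no remainder region ever appears; what you identify as ``the main obstacle'' simply does not arise in the paper's argument.

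Your direct Laplace route is viable in principle, and the corner comparison $\cos\theta-\xi\asymp A_+p+cq+(b/A_+)r$ on $N$ together with the factorised integral does produce the correct main term. But the remainder step as you have written it does not close. The claim that ``the crude bound $\tau^{-\lambda-1}e^{-\theta^2/\tau}e^{-m/(c'\tau)}$ with $m=\min\{\sqrt a,\,c,\,b/\sqrt a\}$ is dominated by the main term'' fails, for instance, in the regime $c\asymp\tau$ with $\sqrt a\gg\tau$ and $b/\sqrt a\gg\tau$: on the slab $\{|v_2|\le\tfrac12\}$ one has $m=c\asymp\tau$, so your crude bound is $\asymp\tau^{-\lambda-1}e^{-\theta^2/\tau}$, whereas the asserted main term carries the additional small factor $\tau^{\alpha+\mu}(\sqrt a)^{-\alpha}(b/\sqrt a)^{-\mu}\ll1$. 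The repair is to drop ``sup $\times$ measure'' and actually integrate over the remaining variables on each piece of the complement---which forces you to redo the corner analysis on every sub-slab and amounts to considerably more bookkeeping than your sketch suggests. This is precisely the labour that the paper's iterated use of Lemma~\ref{LNSS1} packages once and for all.
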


\subsection{Jacobi heat kernel on $\mathbb{V}^{d+1}_0$} This material comes from \cite[Sections~7~and~8]{Xu20}, where one should specify $\beta=-1$. 
Given $d \in \{2,3, \dots\}$, consider the domain
\begin{align} \label{def2}
\mathbb{V}^{d+1}_0 \coloneqq \big\{(x, t) \in \mathbb{R}^{d} \times \mathbb{R} :  \norm{x} = t, \, t \in [0,1] \big\}
\end{align} 
contained in $\RR^{d+1}$, equipped
with the weight
\begin{equation*}
\varphi_{\gamma}(t) \coloneqq t^{-1}(1 - t)^{\gamma},
\end{equation*}
associated with the $d$-dimensional Lebesgue measure $\rho(x,t)$ on $\mathbb{V}^{d+1}_0$, where $\gamma \in (-1, \infty)$ is a~fixed parameter.
Then, for each $n \in \{0,1, \dots\}$, the space $\mathcal{V}_{n}(\varphi_{\gamma})$ of orthogonal polynomials of degree $n$ related to $\varphi_{\gamma}$, is described in terms of the so-called Jacobi polynomials on the surface of the cone. Moreover, there exists a suitable operator $\mathcal D_{\gamma}$ acting on a subspace of $\mathcal L^2(\mathbb{V}^{d+1}, c_{\gamma} \varphi_{ \gamma})$, where $c_{\gamma}$ is the normalizing constant, with all $\mathcal{V}_{n}(\varphi_{\gamma})$ being its eigenspaces. The associated eigenvalues equal $-n(n+ \gamma + d -1)$.

As before, for the sake of completeness we recall that 
\begin{align*}
\mathcal D_{\gamma} & \coloneqq  
t (1-t) \partial_t^2 + \big(d-1- (d + \gamma)t \big) \partial_t + t^{-1} \Delta_0^{(x)},
\end{align*}
where $\Delta_0^{(x)}$ is the Laplace--Beltrami operator on the unit sphere $\mathbb S^{d-1}$ taken in $x$. More precisely, given a polynomial $f \in \mathcal{V}_{n}(\varphi_{\gamma})$ we define $\Delta_0^{(x)} f$ for each $t$ separately, referring to the function $\frac{x}{t} \mapsto f(x,t)$ defined on $\mathbb S^{d-1}$ through the projection $(x,t) \mapsto \frac{x}{t}$, and mapping the result back through the inverse transformation $\frac{x}{t} \mapsto (x,t)$, see \cite[Theorem~7.2]{Xu20} for details.

If $\gamma \geq -\frac{1}{2}$, then $P_{n} (\varphi_{\gamma}; (x, t), (y, s))$, the reproducing kernel of $\mathcal{V}_{n}(\varphi_{\gamma})$, is given by the following integral 
(cf.~\cite[(8.5)]{Xu20})    
\begin{align*} \label{RK2} 
P_{n}\big(\varphi_{\gamma}; (x, t), (y, s)\big)
\coloneqq \int \limits_{[-1, 1]^2} Z_{2n}^{\gamma + d-1} \big(\xi(x, t, y, s; v_1,v_2)\big)   
\, \dint \Pi_{\frac{d-3}{2}}(v_1) \dint \Pi_{\gamma}(v_2),
\end{align*}
where $\xi(x, t, y, s; v_1,v_2) \coloneqq  v_1 \sqrt{ \tfrac{1}{2} ( st + \langle x, y \rangle ) } + v_2 \sqrt{1-t} \sqrt{1-s} \in [-1,1]$.

\begin{definition}
	\label{HK2}
	Let $\gamma \in [-\frac{1}{2}, \infty)$. Then, for each $\tau \in (0, \infty)$, the associated Jacobi heat kernel on $\mathbb{V}^{d+1}_0$ is given by
	\begin{equation*}
	h_{\tau} \big(\varphi_{\gamma};(x, t),  (y, s) \big) 
	\coloneqq \sum \limits_{n=0}^{\infty} e^{- \tau n (n + \gamma + d - 1)} \, P_{n}\big(\varphi_{\gamma};(x, t), (y, s)\big).
	\end{equation*}
\end{definition}

In \cite[Definition~4.1]{Xu21} the following distance function on $\mathbb{V}^{d+1}_0$ was introduced
\begin{equation*}
{\rm dist}_{\mathbb{V}_0^{d+1}} \big((x,t),(y,s) \big) \coloneqq
\arccos \Big( \sqrt{\tfrac{1}{2} \big( st + \langle x, y \rangle \big)} + \sqrt{1-t}\sqrt{1-s} \Big).
\end{equation*}
According to this, for the Jacobi heat kernel on $\mathbb{V}_0^{d+1}$ we shall show the following result.

\begin{theorem} \label{T2} 
	Let $\gamma \in [-\frac{1}{2}, \infty)$. Then $h_{\tau} (\varphi_{\gamma}; \, (x,t), (y,s))$, the Jacobi heat kernel on $\mathbb{V}^{d+1}_0$, is comparable to 
	\begin{align*}
	 \tau^{-\frac{d}{2}} \big(\sqrt{1-t}\sqrt{1-s} \vee \tau\big)^{-\gamma - \frac{1}{2}} \big(\sqrt{st + \langle x, y \rangle} \vee \tau\big)^{-\frac{d}{2} + 1}
	 \exp \big\{ -\tfrac{1}{\tau} {\rm dist}^2_{\mathbb{V}_0^{d+1}} \big((x,t),(y,s) \big) \big\}
	\end{align*}
	for $\tau \in (0, 4]$, and to $1$ for $\tau \in [4, \infty)$, uniformly in $(x, t), (y, s) \in \mathbb{V}^{d+1}_0$.
\end{theorem}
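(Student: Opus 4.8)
The plan is to reduce the kernel on $\mathbb{V}^{d+1}_0$ to an integral average of the classical one-dimensional Jacobi heat kernel on $[-1,1]$, to insert into this average the genuinely sharp estimate for the latter obtained in \cite{NSS21}, and then to evaluate the resulting elementary integral. Throughout, $f \asymp g$ means that $c g \le f \le C g$ with constants $0 < c \le C$ independent of $(x,t),(y,s) \in \mathbb{V}^{d+1}_0$ and of $\tau$. For the reduction, put $\lambda \coloneqq \gamma + d - \tfrac32$, so that $\lambda + \tfrac12 = \gamma + d - 1$ and $n(n+\lambda+\tfrac12) = n(n+\gamma+d-1)$, the eigenvalue appearing in Definition~\ref{HK2}. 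By the quadratic transformation for ultraspherical polynomials (see \cite{Sz75}), $P^{\lambda,\lambda}_{2n}(w)$ is an $n$-dependent constant multiple of $P^{\lambda,-\frac12}_n(2w^2-1)$; substituting this into \eqref{Gegen} and tracking the change of the norming constant under $u = 2w^2-1$ one finds that the $n$-dependent factor cancels, so that
\begin{equation*}
	Z^{\gamma+d-1}_{2n}(w) = c_{d,\gamma}\,\frac{P^{\lambda,-\frac12}_n(1)\,P^{\lambda,-\frac12}_n(2w^2-1)}{h^{\lambda,-\frac12}_n}, \qquad w \in [-1,1],
\end{equation*}
with $c_{d,\gamma}$ independent of $n$; here $h^{\lambda,-\frac12}_n$ is the squared norm of $P^{\lambda,-\frac12}_n$ in $\mathcal L^2([-1,1], \dint\mu_{\lambda,-\frac12})$, where $\dint\mu_{\lambda,-\frac12}$ is the Jacobi probability measure with density proportional to $(1-u)^{\lambda}(1+u)^{-\frac12}$. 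Since $|Z^{\gamma+d-1}_{2n}(w)| \le Z^{\gamma+d-1}_{2n}(1)$ and $Z^{\gamma+d-1}_{2n}(1)$ grows at most polynomially in $n$, the series in Definition~\ref{HK2} converges absolutely, so Fubini's theorem gives, for every $\tau > 0$,
\begin{equation*}
	h_{\tau}\big(\varphi_{\gamma};(x,t),(y,s)\big) = c_{d,\gamma} \int_{[-1,1]^2} G^{\lambda,-\frac12}_{\tau}\big(1,\,2\xi^2-1\big)\,\dint\Pi_{\frac{d-3}{2}}(v_1)\,\dint\Pi_{\gamma}(v_2),
\end{equation*}
where $\xi = \xi(x,t,y,s;v_1,v_2)$ and $G^{a,b}_{\tau}(u,u') \coloneqq \sum_{n=0}^{\infty} e^{-\tau n(n+a+b+1)}\, P^{a,b}_n(u)\, P^{a,b}_n(u')/h^{a,b}_n$ is the Jacobi heat kernel on $[-1,1]$.

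Next I would insert the sharp one-dimensional estimate. Write $2\xi^2-1 = \cos 2\phi$ with $\phi \coloneqq \arccos|\xi| \in [0,\tfrac\pi2]$ and $1 = \cos 0$. In the estimate of \cite{NSS21} for $G^{\lambda,-\frac12}_{\tau}$ the boundary factor tied to the endpoint $-1$ is absent because the second Jacobi parameter equals $-\tfrac12$, the factor tied to $1$ collapses to a power of $\tau$ because one spatial argument equals $1$, and the Gaussian exponent between the arguments $\cos 0$ and $\cos 2\phi$ equals $\phi^2/\tau$; thus $G^{\lambda,-\frac12}_{\tau}(1,2\xi^2-1) \asymp \tau^{-\lambda-1}\exp\!\big(-\tfrac1\tau(\arccos|\xi|)^2\big)$, uniformly in $\xi \in [-1,1]$, for $\tau \in (0,4]$. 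Since $\lambda+1 = \gamma+d-\tfrac12$, it follows that $h_{\tau}(\varphi_{\gamma};(x,t),(y,s)) \asymp \tau^{-\gamma-d+\frac12}\,I(\tau)$ for $\tau \in (0,4]$, with
\begin{equation*}
	I(\tau) \coloneqq \int_{[-1,1]^2} \exp\Big(\!-\tfrac1\tau\big(\arccos\big|\xi(x,t,y,s;v_1,v_2)\big|\big)^2\Big)\,\dint\Pi_{\frac{d-3}{2}}(v_1)\,\dint\Pi_{\gamma}(v_2).
\end{equation*}
The range $\tau \in [4,\infty)$ is elementary: the $n=0$ term of the series equals $1$, the remainder has modulus at most $\sum_{n\ge 1} e^{-\tau n(n+\gamma+d-1)}\, Z^{\gamma+d-1}_{2n}(1) \to 0$ as $\tau \to \infty$, and positivity together with joint continuity of the heat kernel gives $h_{\tau}(\varphi_{\gamma};\cdot,\cdot) \asymp 1$ on $[4,\infty)$.

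It remains to estimate $I(\tau)$ for $\tau \in (0,4]$ and to verify that $\tau^{-\gamma-d+\frac12}\,I(\tau)$ equals the expression in the statement. Put $A \coloneqq \sqrt{\tfrac12(st+\langle x,y\rangle)}$, $B \coloneqq \sqrt{1-t}\sqrt{1-s}$ and $D \coloneqq {\rm dist}_{\mathbb{V}_0^{d+1}}((x,t),(y,s))$, so that $A,B \ge 0$, $A+B = \cos D \in [0,1]$ and $\xi = v_1 A + v_2 B$. From $|\xi| \le A+B = \cos D$ one gets $\arccos|\xi| \ge D$ on $[-1,1]^2$, with equality at $(v_1,v_2) = \pm(1,1)$, so $\exp(-D^2/\tau)$ factors out of both bounds. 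I would then: (i)~localise the integral to small neighbourhoods of the two corners $(1,1)$ and $(-1,-1)$, which contribute equally by the symmetry $(v_1,v_2) \mapsto (-v_1,-v_2)$ of the measures, and show that outside them $(\arccos|\xi|)^2 - D^2$ is bounded below by a quantity large enough for the Gaussian to suppress the at most polynomial growth of the density; and (ii)~near $(1,1)$ substitute $v_1 = \cos a$, $v_2 = \cos b$ with $a,b$ small, use $\xi = A\cos a + B\cos b = \cos D - \tfrac12(Aa^2+Bb^2) + O(a^4+b^4)$ and the uniform bounds $(\arccos(\cos D - \varepsilon))^2 = D^2 + O(\varepsilon)$ and $(\arccos(\cos D - \varepsilon))^2 \ge D^2 + c\varepsilon$ for small $\varepsilon \ge 0$, and reduce, via $\dint\Pi_{\frac{d-3}{2}}(\cos a) \asymp a^{d-3}\,\dint a$ and $\dint\Pi_{\gamma}(\cos b) \asymp b^{2\gamma}\,\dint b$ near $0$, to an integral of the form
\begin{equation*}
	\int_0^{\delta}\!\!\int_0^{\delta} e^{-c(Aa^2+Bb^2)/\tau}\, a^{d-3}\, b^{2\gamma}\,\dint a\,\dint b \ \asymp\ \tau^{\frac{d-2}{2}}(A\vee\tau)^{-\frac{d-2}{2}}\cdot \tau^{\gamma+\frac12}(B\vee\tau)^{-\gamma-\frac12}.
\end{equation*}
Since $A \asymp \sqrt{st+\langle x,y\rangle}$, hence $A\vee\tau \asymp \sqrt{st+\langle x,y\rangle}\vee\tau$, and $B\vee\tau = \sqrt{1-t}\sqrt{1-s}\vee\tau$, multiplying by $\tau^{-\gamma-d+\frac12}$ and collecting the powers of $\tau$ reproduces exactly the expression in Theorem~\ref{T2}. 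The same scheme, with the obvious modifications, covers $d=2$ (where $\Pi_{\frac{d-3}{2}} = \Pi_{-\frac12} = \tfrac12(\delta_{-1}+\delta_1)$, so the $v_1$-integral is a two-point average which, after using the $v_2$-symmetry, collapses to a single integral over $v_2$), $d=3$ (where $\Pi_0$ is the arcsine measure), and borderline configurations such as $A = 0$ or $B = 0$, where one of the inner integrals contributes its full mass $1$ in agreement with the corresponding factor $\tau^{\frac{d-2}{2}}(A\vee\tau)^{-\frac{d-2}{2}}$, respectively $\tau^{\gamma+\frac12}(B\vee\tau)^{-\gamma-\frac12}$, reducing to $1$.

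I expect the main obstacle to be the evaluation of $I(\tau)$, and within it the uniform quantitative lower bound on $(\arccos|\xi|)^2 - D^2$ away from the two corners: it has to be proved in all regimes, notably when $D$ is close to $0$ (near-diagonal), when $A+B = \cos D$ is close to $0$ (nearly antipodal directions on the cone surface, where no genuine localisation occurs and the whole square contributes comparably), and when $t$ or $s$ is close to $0$ or $1$, each calling for a slightly different decomposition of $[-1,1]^2$. This is precisely the type of analysis developed in \cite{NSS21}; once it is in place the remaining computations are routine. Theorem~\ref{T1} would be handled along the same lines, the reproducing kernel there carrying one additional integration variable and $\lambda$ being replaced throughout by $2\alpha+\gamma+\tfrac12$.
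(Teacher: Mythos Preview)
Your proposal is correct and reaches the same intermediate integral $I(\tau)$ as the paper, but via a genuinely different reduction. The paper (Lemma~\ref{LHK2}) inserts the odd-indexed terms $Z_{2n+1}^{\gamma+d-1}(\xi)$, which integrate to zero by parity in $(v_1,v_2)$, so as to complete the sum to the full ultraspherical heat kernel $G_{\tau/4}^{\lambda,\lambda}(1,\xi)$ at the rescaled time $\tau/4$; you instead invoke the quadratic transformation to pass directly to $G_\tau^{\lambda,-1/2}(1,2\xi^2-1)$. After inserting the respective sharp one-dimensional estimates the two routes yield the same integrand: the paper's factor $(\tau+\pi-\arccos\xi)^{-\lambda-1/2}$ becomes $\simeq 1$ once the integration is reduced by symmetry to $[0,1]^2$ where $\xi\ge 0$, while in your version the corresponding factor is absent from the outset because the second Jacobi parameter equals $-\tfrac12$. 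Your route has the merit of sidestepping the ``artificial odd terms'' device that the paper flags as new (remark~(f) in Subsection~\ref{S1.3}); the paper's route stays within the symmetric Jacobi family and so can quote Lemma~\ref{LNSS3} verbatim.

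For the evaluation of $I(\tau)$ the two approaches are not really different in substance: the paper simply applies \cite[Lemma~2.1]{NSS21} (recorded here as Lemma~\ref{LNSS1}) twice, once in $v_2$ and once in $v_1$, which is precisely the packaged form of the Laplace-type localisation you sketch by hand. Your outline is sound, but rather than redoing the uniform case analysis---near-diagonal $D\approx 0$, degenerate $A=0$ or $B=0$, and the endpoint parameters $d=2$ or $\gamma=-\tfrac12$ where $\Pi_{-1/2}$ is a two-point measure---you may cite that lemma directly; it already covers all these regimes, and the proof then collapses to a few lines. Your identification of this step as the main obstacle, and of \cite{NSS21} as the place where it is handled, is accurate.
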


\subsection{Comments} \label{S1.3} Several remarks regarding Theorems~\ref{T1}~and~\ref{T2} are in order.
\begin{enumerate}[label=(\alph*)]
	\item The obtained estimates are genuinely sharp.   
	We emphasize that these are ones of the very few instances in which such a high level of precision has been achieved. 
	\item As in more classical situations, the presence of the distance functions in the exponents was expected. The other terms capture the specific behavior of the kernels evaluated at points close to the -- properly understood for $\mathbb V^{d+1}_0$ -- boundaries of the domains.   
	\item Multidimensional cones inherit geometrical properties of both, intervals and Euclidean balls, which is important in the analysis of the associated Jacobi setting. For $d=1$ the geometry simplifies significantly, as the Euclidean component disappears, hence this case is not treated here. For further information see \cite[Section~2.5]{Xu20}.    
	\item \label{d} Wider ranges of parameters in the context of the Jacobi setting can be considered, namely, $\mu \in (-\frac{1}{2}, \infty)$ and $\beta, \gamma \in (-1, \infty)$ with $\alpha \coloneqq \mu+ \frac{\beta + d -1}{2}$ for $\mathbb{V}^{d+1}$ (cf.~\cite[Section~3.1]{Xu20}), or $\beta \in (-d, \infty)$ and $\gamma \in (-1, \infty)$ for $\mathbb{V}^{d+1}_0$ (cf.~\cite[Section~7.1]{Xu20}). However, we were not able to extend the ranges in Theorems~\ref{T1}~and~\ref{T2}. The additional parameter $\beta$, corresponding to the factor $t^\beta$ in the weights, is specified to be either $0$ or $-1$ because only then the subspaces generated by the Jacobi polynomials of degree $n$ become eigenspaces of the related differential operators, see \cite[Remarks~3.1~and~7.1]{Xu20}. The parameters $\mu$ and $\gamma$ are restricted in accordance to the ranges in which useful formulas for the associated reproducing kernels hold, see \cite[Theorems~4.3~and~8.2]{Xu20}.   
	\item We focus only on $\tau \in (0, 4]$ because the uniform estimates for $\tau \in [4, \infty)$ are known. Also, we note that for each $T \in (0, \infty)$ the ranges $(0,T]$ and $[T,\infty)$ can be considered instead, and the same results follow with the implicit constants depending on $T$.
	\item The idea behind Theorems~\ref{T1}~and~\ref{T2} is to express the studied Jacobi heat kernels in terms of the heat kernel related to the classical Jacobi expansions on $[-1,1]$, see Lemmas~\ref{LHK1}~and~\ref{LHK2}. In principle, we follow the strategy proposed in \cite{NSS21} but then it turns out that the $n$-th terms in our original kernels correspond to the $2n$-th terms in the classical heat kernel so that the odd terms, whose integrals equal $0$ thanks to some parity arguments, should be artificially added to the formulas. This phenomenon seems to be new, as it did not appear in the context of spaces considered in \cite{NSS21}.
\end{enumerate}

\subsection*{Acknowledgments} The authors are thankful to Adam Nowak for drawing their attention to the topic discussed in the article, and to Tomasz Z. Szarek for his helpful suggestions.

The authors are also indebted to the anonymous referees for their valuable remarks which led to a significant improvement in the presentation of the results.  

The second author was supported by the Basque Government (BERC 2022-2025), by the Spanish State Research Agency
(CEX2021-001142-S and RYC2021-031981-I), and by the Foundation for Polish Science (START 032.2022).   

\section{Technical preparation} In this section we collect auxiliary results that will help us to prove the main theorems. We first give an alternative formula for the Jacobi heat kernels, following the arguments presented in \cite{NSS21}. The main idea here is to get rid of the oscillatory nature of Jacobi polynomials, and to end up with a certain positive expression which is much easier to deal with.

Given $\lambda \in [0, \infty)$, we recall the formula for the heat kernel $G_{\tau }^{\lambda, \lambda}$ associated with the classical Jacobi polynomials $P_n^{\lambda,\lambda}$ on $[-1,1]$, when the first argument equals $1$. For $\tau \in (0, \infty)$, we have
\begin{equation*}
G_{\tau }^{\lambda, \lambda}(1, w) \coloneqq \sum \limits_{n=0}^{\infty} e^{-\tau  \lambda_{n}} \frac{P_n^{\lambda, \lambda}(1) P_n^{\lambda, \lambda}(w)}{h_n^{\lambda, \lambda}}
= \sum \limits_{n=0}^{\infty} e^{-\tau  \lambda_{n}} Z_n^{\lambda + \frac{1}{2}}(w),
\end{equation*}
where $\lambda_{n} \coloneqq n(n + 2\lambda + 1)$. 

The heat kernels from Definitions~\ref{HK1}~and~\ref{HK2} can be described in terms of $G_{\tau }^{\lambda, \lambda}$. 

\begin{lemma}
	\label{LHK1}
	For each $\tau \in (0,\infty)$, the Jacobi heat kernel from Definition~\ref{HK1} can be given by
	\begin{align*}
	\int \limits_{[-1, 1]^3} G_{\frac{\tau}{4}}^{2\alpha + \gamma + \frac{1}{2},  2\alpha + \gamma + \frac{1}{2}}\big(1, \xi(x, t, y, s; u, v_1, v_2)\big)
	\, \dint \Pi_{\mu - \frac{1}{2}}(u) \dint \Pi_{\alpha - \frac{1}{2}}(v_1) \dint \Pi_{\gamma}(v_2).
	\end{align*}	
\end{lemma}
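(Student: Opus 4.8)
The plan is to derive the identity by expanding the classical Jacobi heat kernel $G^{\lambda,\lambda}_{\tau/4}(1,\cdot)$ into its defining series, integrating it term by term against the product measure, and then observing that the series of the Jacobi heat kernel from Definition~\ref{HK1} sits precisely among the even-indexed terms, while the odd-indexed terms contribute nothing. The first bookkeeping step is to choose the parameter correctly: set $\lambda \coloneqq 2\alpha + \gamma + \frac12$, so that $\lambda + \frac12 = 2\alpha + \gamma + 1$, which, since $\alpha = \mu + \frac{d-1}{2}$, equals $2\mu + \gamma + d$. This matches simultaneously the superscript of $Z_{2n}^{2\alpha+\gamma+1}$ appearing in the formula for $P_n(W_{\mu,\gamma};(x,t),(y,s))$ and the coefficient $2\mu+\gamma+d$ in the eigenvalues $n(n+2\mu+\gamma+d)$ of Definition~\ref{HK1}.

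Next I would fix the arguments and write, with $\xi = \xi(x,t,y,s;u,v_1,v_2) \in [-1,1]$,
\[
G_{\frac{\tau}{4}}^{\lambda,\lambda}(1,\xi) = \sum_{m=0}^{\infty} e^{-\frac{\tau}{4}\,m(m+2\lambda+1)}\, Z_m^{\lambda+\frac12}(\xi),
\]
and then integrate this identity over $[-1,1]^3$ against $\dint\Pi_{\mu-\frac12}(u)\,\dint\Pi_{\alpha-\frac12}(v_1)\,\dint\Pi_\gamma(v_2)$. Interchanging summation and integration is legitimate by absolute convergence: one has $|Z_m^{\lambda+\frac12}(\xi)| \le Z_m^{\lambda+\frac12}(1)$, which grows only polynomially in $m$, while the Gaussian factor $e^{-\frac{\tau}{4}m(m+2\lambda+1)}$ decays super-polynomially, and the three measures are probability measures; thus Fubini--Tonelli applies.

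It then remains to split the resulting series according to the parity of $m$. For even $m = 2n$ one computes $\frac{\tau}{4}\,(2n)(2n+2\lambda+1) = \tau\, n(n+\lambda+\tfrac12) = \tau\, n(n+2\mu+\gamma+d)$ and $Z_{2n}^{\lambda+\frac12} = Z_{2n}^{2\alpha+\gamma+1}$, so the even part reproduces exactly $\sum_{n=0}^{\infty} e^{-\tau n(n+2\mu+\gamma+d)} P_n(W_{\mu,\gamma};(x,t),(y,s)) = h_\tau(W_{\mu,\gamma};(x,t),(y,s))$. For odd $m = 2n+1$, I would use that $Z_{2n+1}^{\lambda+\frac12}$ is an odd polynomial, together with the fact that the substitution $(v_1,v_2)\mapsto(-v_1,-v_2)$ sends $\xi = v_1\sqrt{\tfrac12\big(st+\langle x,y\rangle+\sqrt{t^2-\norm{x}^2}\sqrt{s^2-\norm{y}^2}\,u\big)} + v_2\sqrt{1-t}\sqrt{1-s}$ to $-\xi$ while leaving $\dint\Pi_{\alpha-\frac12}(v_1)\,\dint\Pi_\gamma(v_2)$ invariant (each factor is symmetric about the origin; here $\alpha \ge \frac12$, so $\Pi_{\alpha-\frac12}$ is an honest even density, and in the degenerate case $\gamma = -\frac12$ the measure $\Pi_\gamma = \frac12(\delta_{-1}+\delta_1)$ is still symmetric). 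Consequently every odd-indexed integral vanishes, only the even part survives, and the lemma follows.

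The step requiring genuine attention rather than routine work is the matching of the parameter $\lambda$ and the time rescaling $\tau \mapsto \tau/4$ so that the even subseries aligns term by term with Definition~\ref{HK1}, combined with the observation -- which, as noted in \ref{d} of Subsection~\ref{S1.3}, does not occur in the settings of \cite{NSS21} -- that the ``missing'' odd-indexed terms may be inserted for free precisely because their integrals are killed by the symmetry $(v_1,v_2)\mapsto(-v_1,-v_2)$. The convergence bookkeeping and the use of Fubini are then entirely standard.
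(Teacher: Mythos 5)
Your proof is correct and follows essentially the same route as the paper's: the parameter identification $\lambda=2\alpha+\gamma+\tfrac12$, the rescaling $\tau\mapsto\tau/4$ matching $\tau n(n+2\mu+\gamma+d)=\tfrac{\tau}{4}\,2n(2n+2\lambda+1)$, and the vanishing of the odd-indexed terms via the symmetry $(v_1,v_2)\mapsto(-v_1,-v_2)$ are exactly the ingredients of the paper's argument, merely read in the opposite direction (you expand $G$ and discard the odd part, while the paper starts from Definition~\ref{HK1} and inserts the odd terms for free). Your additional justification of the series--integral interchange via $|Z_m^{\lambda+\frac12}(\xi)|\le Z_m^{\lambda+\frac12}(1)$ is a welcome bit of extra care that the paper leaves implicit.
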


\begin{proof}
	We expand the left hand side of the formula from Definition~\ref{HK1}. Then we obtain
	\begin{align*}
	\sum \limits_{n=0}^{\infty} e^{- \tau n (n + 2\mu + \gamma + d)}
	\int \limits_{[-1, 1]^3} Z_{2n}^{2\alpha + \gamma + 1}\big(\xi(x, t, y, s; u, v_1, v_2)\big) 
	\, \dint \Pi_{\mu - \frac{1}{2}}(u) \dint \Pi_{\alpha - \frac{1}{2}}(v_1) \dint \Pi_{\gamma}(v_2).
	\end{align*}
	Observe that
	$
	\tau n (n + 2\mu + \gamma + d) = \frac{\tau}{4} 2n(2n + 4\alpha + 2\gamma + 2).
	$
	Using this and the fact that the functions 
	$
	Z_{2n+1}^{2\alpha + \gamma + 1}\big(\xi(x, t, y, s;u, v_1, v_2)\big)
	$
	are odd with respect to $v \coloneqq (v_1,v_2)$, we arrive at
	\begin{align*}
	\sum \limits_{n=0}^{\infty} e^{- \frac{\tau}{4} n(n + 4\alpha + 2\gamma + 2)}
	\int \limits_{[-1, 1]^3} Z_{n}^{2\alpha + \gamma + 1}\big(\xi(x, t, y, s;u, v_1, v_2)\big) 
	\, \dint \Pi_{\mu - \frac{1}{2}}(u) \dint \Pi_{\alpha - \frac{1}{2}}(v_1) \dint \Pi_{\gamma}(v_2).
	\end{align*}
	Then Fubini's theorem gives the claim, since $n(n+4 \alpha + 2 \gamma + 2) = \lambda_{n}$ if $\lambda = 2 \alpha + \gamma + \frac{1}{2}$. 
\end{proof}

\begin{lemma}
	\label{LHK2}
	For each $\tau \in (0,\infty)$, the Jacobi heat kernel from Definition~\ref{HK2} can be given by
	\begin{align*}
	\int \limits_{[-1, 1]^2} G_{\frac{\tau}{4}}^{\gamma+d- \frac{3}{2}, \gamma+d- \frac{3}{2}}\big(1, \xi(x, t, y, s; v_1, v_2) \big) 
	\, \dint \Pi_{\frac{d-3}{2}}(v_1) \dint \Pi_{\gamma}(v_2).
	\end{align*}
\end{lemma}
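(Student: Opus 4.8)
The plan is to mirror the proof of Lemma~\ref{LHK1} almost verbatim, exploiting the parity trick once more. First I would expand the definition of $h_{\tau}(\varphi_{\gamma};(x,t),(y,s))$ from Definition~\ref{HK2}, obtaining
\[
\sum_{n=0}^{\infty} e^{-\tau n(n+\gamma+d-1)} \int_{[-1,1]^2} Z_{2n}^{\gamma+d-1}\big(\xi(x,t,y,s;v_1,v_2)\big) \, \dint \Pi_{\frac{d-3}{2}}(v_1)\dint\Pi_{\gamma}(v_2).
\]
Next I would rewrite the exponent: $\tau n(n+\gamma+d-1) = \frac{\tau}{4}\,2n(2n+2\gamma+2d-2)$, so that the surviving (even-index) terms of a sum over all nonnegative integers match exactly. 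Then the key observation is that for odd indices $2n+1$ the function $Z_{2n+1}^{\gamma+d-1}(\xi(x,t,y,s;v_1,v_2))$ is odd in $v=(v_1,v_2)$: indeed $Z_m^{\lambda+\frac12}$ has the same parity as $m$, and $\xi$ is linear in $(v_1,v_2)$, so $\xi$ changes sign under $v\mapsto -v$ while the symmetric product measure $\dint\Pi_{\frac{d-3}{2}}(v_1)\dint\Pi_{\gamma}(v_2)$ is invariant; hence the integrals of the odd terms vanish. This lets me artificially insert the odd terms and reindex, arriving at
\[
\sum_{m=0}^{\infty} e^{-\frac{\tau}{4} m(m+2\gamma+2d-2)} \int_{[-1,1]^2} Z_{m}^{\gamma+d-1}\big(\xi(x,t,y,s;v_1,v_2)\big) \, \dint \Pi_{\frac{d-3}{2}}(v_1)\dint\Pi_{\gamma}(v_2).
\]
Finally I would identify $m(m+2\gamma+2d-2)=\lambda_m$ with $\lambda=\gamma+d-\frac32$, since $2\lambda+1 = 2\gamma+2d-2$, so the inner sum is exactly $G_{\tau/4}^{\gamma+d-\frac32,\gamma+d-\frac32}(1,\xi)$; an application of Fubini's theorem (justified by absolute convergence of the Jacobi heat kernel series for $\tau>0$, uniformly on $[-1,1]$) then gives the claimed formula.

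The only genuine point requiring care, and the main obstacle, is the parity argument: one must check both that $Z^{\lambda+\frac12}_{2n+1}$ is an odd polynomial (recorded in the excerpt after \eqref{Gegen}) and that $\xi(x,t,y,s;v_1,v_2)$ is odd under the reflection $(v_1,v_2)\mapsto(-v_1,-v_2)$ — which is clear from its explicit linear-in-$v$ form — while the measure $\dint\Pi_{\frac{d-3}{2}}(v_1)\dint\Pi_{\gamma}(v_2)$ is symmetric under each sign change; this forces the odd-index integrals to be $0$. Everything else is bookkeeping identical to Lemma~\ref{LHK1}, with the single difference that here only two auxiliary variables $v_1,v_2$ appear (there is no $u$-integration, and correspondingly no $\Pi_{\mu-\frac12}$ factor), and the parameter shift is $\lambda=\gamma+d-\frac32$ instead of $\lambda=2\alpha+\gamma+\frac12$.

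Note that the range $\gamma\in[-\frac12,\infty)$ guarantees $\lambda=\gamma+d-\frac32\geq d-2\geq 0$ for $d\in\{2,3,\dots\}$, so $G^{\lambda,\lambda}_{\tau/4}(1,\cdot)$ is well-defined in the admissible range of the parameter, and the reproducing kernel formula for $P_n(\varphi_\gamma;\cdot,\cdot)$ used at the first step holds precisely on this range. Thus the proof is complete once the reindexing and Fubini steps are spelled out as above.
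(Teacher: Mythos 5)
Your proposal is correct and follows essentially the same route as the paper's own proof: expand Definition~\ref{HK2}, rewrite the exponent as $\frac{\tau}{4}\,2n(2n+2\gamma+2d-2)$, insert the odd-index terms (which vanish by the parity of $Z_{2n+1}^{\gamma+d-1}$ combined with the linearity of $\xi$ in $(v_1,v_2)$ and the symmetry of the measures, including the case $\Pi_{-\frac12}$ when $d=2$), and conclude via Fubini with $\lambda=\gamma+d-\frac32$. The extra checks you record (the parameter range $\lambda\ge 0$ and the justification of the parity step) are consistent with, and slightly more detailed than, the paper's argument.
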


\begin{proof}
	We expand the left hand side of the formula from Definition~\ref{HK2}. Then we obtain
	\begin{align*}
	\sum \limits_{n=0}^{\infty} e^{- \tau n (n + \gamma + d - 1)}
	\int \limits_{[-1, 1]^2} Z_{2n}^{\gamma + d-1} \big(\xi(x, t, y, s; v_1,v_2)\big) 
	\, \dint \Pi_{\frac{d-3}{2}}(v_1) \dint \Pi_{\gamma}(v_2).
	\end{align*}
	Observe that
	$
	\tau n (n + \gamma + d - 1) = \frac{\tau}{4} 2n(2n + 2\gamma + 2d - 2).
	$
	Using this and the fact that the functions
	$
	Z_{2n+1}^{\gamma + d-1} \big(\xi(x, t, y, s; v_1, v_2)\big)
	$
	are odd with respect to $v \coloneqq (v_1,v_2)$, we arrive at
	\begin{align*}
	\sum \limits_{n=0}^{\infty} e^{- \frac{\tau}{4} n(n + 2\gamma + 2d - 2)}
	\int \limits_{[-1, 1]^2} Z_{n}^{\gamma + d-1} \big(\xi(x, t, y, s; v_1, v_2)\big) 
	\, \dint \Pi_{\frac{d-3}{2}}(v_1) \dint \Pi_{\gamma}(v_2).
	\end{align*}
	Then Fubini's theorem gives the claim, since $n(n + 2 \gamma + 2d - 2) = \lambda_{n}$ if $\lambda = \gamma + d - \frac{3}{2}$. 
\end{proof}

The rest of this section is devoted to recalling various estimates obtained in \cite{NSS18} in the context of the Jacobi heat kernel on $\mathbb{B}^d$. The first one is \cite[Lemma~2.1]{NSS21} with $\xi = 0$.   

\begin{lemma}
	\label{LNSS1}
	Fix $\nu \in [-\frac{1}{2}, \infty)$, $\xi \in \mathbb{R}$, and let $\Phi_{A, B}(w) \coloneqq \arccos(A + Bw)$. Then
	\begin{align*}
	\int \limits_{[0, 1]} \exp \big\{ -\Phi_{A, B}^2(w)  D^{-1} \big\} \, \dint \Pi_{\nu}(w)
	\simeq D^{\nu + \frac{1}{2}} \big(B (\pi -  \Phi_{A, B}(1))^{-1} + D \big)^{- \nu - \frac{1}{2}} \exp \big\{ -\Phi_{A, B}^2(1) D^{-1} \big\}
	\end{align*}
	uniformly in $B \in [0,1]$, $A \in [-1, 1-B]$, and $D \in (0, \infty)$ (with $B (\pi -  \Phi_{A, B}(1))^{-1} = 0$ if $B=0$).
\end{lemma}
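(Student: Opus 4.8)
The plan is to reduce the integral to a one-dimensional Laplace-type estimate by analyzing where the integrand concentrates. Write $\theta_1 \coloneqq \Phi_{A,B}(1) = \arccos(A+B)$ and $\theta_0 \coloneqq \Phi_{A,B}(0) = \arccos(A)$, noting $0 \le \theta_1 \le \theta_0 \le \pi$ since $\Phi_{A,B}$ is decreasing in $w$. The function $w \mapsto \exp\{-\Phi_{A,B}^2(w)D^{-1}\}$ is increasing on $[0,1]$ and attains its maximum $\exp\{-\theta_1^2 D^{-1}\}$ at $w=1$; hence the integral is governed by a neighborhood of $w=1$, where the weight $\dint\Pi_\nu$ behaves like $c_\nu(1-w^2)^{\nu-\frac12}\,\dint w \simeq (1-w)^{\nu-\frac12}\,\dint w$ (the endpoint $w=-1$ is irrelevant). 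First I would linearize: for $w$ near $1$, $\Phi_{A,B}(w) - \theta_1 \simeq \frac{B}{\sqrt{1-(A+B)^2}}(1-w) = \frac{B}{\sin\theta_1}(1-w)$, so that $\Phi_{A,B}^2(w) - \theta_1^2 \simeq \frac{B\theta_1}{\sin\theta_1}(1-w) \simeq \frac{B\,\theta_1}{\pi-\theta_1}\cdot\frac{\pi-\theta_1}{\sin\theta_1}(1-w)$; using $\frac{\pi-\theta_1}{\sin\theta_1}\simeq 1$ uniformly on $[0,\pi)$ (and handling $\theta_1$ near $0$ by $\theta_1/\sin\theta_1\simeq 1$ there) this is comparable to $\frac{B}{\pi-\theta_1}(1-w)$ times a bounded factor — here the quantity $B(\pi-\Phi_{A,B}(1))^{-1}$ appearing in the statement emerges naturally.

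With $r \coloneqq B(\pi-\theta_1)^{-1} \in [0,\infty)$ (interpreted as $0$ when $B=0$), the task becomes to show
\begin{align*}
\int_0^1 \exp\{-\theta_1^2 D^{-1} - c\, r\, (1-w) D^{-1}\}(1-w)^{\nu-\frac12}\,\dint w \simeq e^{-\theta_1^2 D^{-1}} D^{\nu+\frac12}(r+D)^{-\nu-\frac12},
\end{align*}
which after substituting $u = 1-w$ and then $u = (r+D)^{-1}Dv$ (or simply $u = Dv$ when $r \lesssim D$) reduces to the elementary fact $\int_0^{M} e^{-v} v^{\nu-\frac12}\,\dint v \simeq \min(M,1)^{\nu+\frac12}$ for $M\in(0,\infty]$ when $\nu > -\frac12$, together with a separate trivial bound $\int_0^1 (1-w)^{-1/2}\,\dint w\simeq 1$ when $\nu=-\frac12$. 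The main obstacle will be making the linearization uniform over the full parameter range — in particular controlling the error term $\Phi_{A,B}^2(w)-\theta_1^2 - c_1 r(1-w)$ by a quantity of the form $c_2 r(1-w)$ with the same sign behavior, so that upper and lower exponential bounds both follow, and doing this uniformly as $\theta_1 \to 0$, as $\theta_1\to\pi$ (equivalently $B\to 0$ or $A+B\to -1$), and as $B\to 0$. One clean way is to split into the regimes $B \le \frac12(\pi - \theta_1)$ and $B > \frac12(\pi-\theta_1)$: in the former the arc from $w=0$ to $w=1$ is short and $\Phi_{A,B}$ is genuinely close to affine there (with two-sided control of derivatives), while in the latter $r\simeq 1$ forces the integral to localize at scale $D$ in $1-w$ regardless, and one can bound $\Phi_{A,B}^2(w)$ below by $\theta_1^2 + c(1-w)$ crudely and above using monotonicity.

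Finally I would record the endpoint/degenerate cases: when $B=0$ the left side is exactly $e^{-\theta_1^2 D^{-1}}$ (a constant, since $\Phi$ is constant) and the right side collapses to $e^{-\theta_1^2 D^{-1}}D^{\nu+\frac12}\cdot D^{-\nu-\frac12} = e^{-\theta_1^2 D^{-1}}$, matching; and the constraint $A \in [-1,1-B]$ is exactly what guarantees $A+Bw \in [-1,1]$ for all $w\in[0,1]$ so that $\Phi_{A,B}$ is well-defined. All implicit constants depend only on $\nu$ (and through $\nu$ on nothing else), which gives the asserted uniformity in $A$, $B$, $D$.
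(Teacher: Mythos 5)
Your plan is essentially sound, but note first that the paper does not prove this lemma at all: it is imported verbatim as \cite[Lemma~2.1]{NSS21} with $\xi=0$, so any self-contained argument is ipso facto ``different from the paper.'' What you propose is in fact close in spirit to the proof in \cite{NSS21}: there one substitutes $\cos\psi = A+Bw$ and analyzes $\int_{\varphi_1}^{\varphi_0} e^{-\psi^2/D}(\cos\varphi_1-\cos\psi)^{\nu-\frac12}\sin\psi\,\dint\psi$ (this is exactly the intermediate object the present paper extracts as Lemma~\ref{LNSS2}), whereas you linearize in $w$ directly. Your route works, and the ``main obstacle'' you flag -- making the two-sided bound $\Phi_{A,B}^2(w)-\Phi_{A,B}^2(1)\simeq B(\pi-\Phi_{A,B}(1))^{-1}(1-w)$ uniform -- dissolves if you avoid Taylor expansion altogether: writing $\psi=\Phi_{A,B}(w)$, $\theta_1=\Phi_{A,B}(1)$, the exact identity $B(1-w)=\cos\theta_1-\cos\psi=2\sin\tfrac{\psi+\theta_1}{2}\sin\tfrac{\psi-\theta_1}{2}$ combined with $\sin x\simeq x(\pi-x)$ on $[0,\pi]$ and $2\pi-\psi-\theta_1\simeq\pi-\theta_1$ (valid since $\theta_1\le\psi\le\pi$) gives $B(1-w)\simeq(\psi^2-\theta_1^2)(\pi-\theta_1)$ globally on $[0,1]$, with absolute constants. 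This makes your case split $B\lessgtr\tfrac12(\pi-\theta_1)$ unnecessary, and the remaining computation $\int_0^1 e^{-\lambda v}v^{\nu-\frac12}\dint v\simeq(1+\lambda)^{-\nu-\frac12}$ with $\lambda=rD^{-1}$ is exactly as you say.

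Two slips to repair. First, your intermediate claim that $(\pi-\theta_1)/\sin\theta_1\simeq1$ uniformly on $[0,\pi)$ is false (the ratio blows up as $\theta_1\to0$); the correct uniform statement is $\sin\theta_1\simeq\theta_1(\pi-\theta_1)$, which is what actually produces the factor $B(\pi-\theta_1)^{-1}$. Second, the endpoint $\nu=-\tfrac12$: the paper defines $\Pi_{-\frac12}$ as the discrete measure $\tfrac12(\delta_{-1}+\delta_1)$, not as a density, so your ``trivial bound $\int_0^1(1-w)^{-1/2}\dint w\simeq1$'' addresses the wrong object (and with the density convention $(1-w^2)^{\nu-\frac12}$ the exponent at $\nu=-\tfrac12$ would be $-1$, which is not even integrable). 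The correct treatment is immediate: the left-hand side equals $\tfrac12\exp\{-\Phi_{A,B}^2(1)D^{-1}\}$ and the right-hand side reduces to $\exp\{-\Phi_{A,B}^2(1)D^{-1}\}$, so the case holds trivially. With these two points fixed, your argument is a complete and correct alternative to citing \cite{NSS21}.
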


\noindent The next estimate follows directly from the proof \cite[Lemma~2.1]{NSS21} with $\xi = 0$.

\begin{lemma}
	\label{LNSS2}
	Fix $\nu \in [-\frac{1}{2}, \infty)$. Then
	\begin{align*}
	\int \limits_{\varphi_1}^{\varphi_0} \exp \big\{ - \tfrac{\psi^2}{D} \big\} ( \cos \varphi_1 - \cos \psi)^{\nu -\frac{1}{2}} \sin \psi \, \dint \psi 
	\simeq D^{\nu+\frac{1}{2}} (\pi - \varphi_1)^{\nu+\frac{1}{2}} \big( \tfrac{(\varphi_0 - \varphi_1) \varphi_0}{(\varphi_0 - \varphi_1) \varphi_0 + D} \big)^{\nu + \frac{1}{2}} \exp \big\{ -\tfrac{\varphi_1^2}{D} \big\}
	\end{align*}
	uniformly in $D \in (0, \infty)$ and $\varphi_0, \varphi_1 \in [0,\pi]$ with $\varphi_1 < \varphi_0$.
\end{lemma}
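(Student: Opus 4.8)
The estimate is, as indicated, essentially an intermediate step in the proof of Lemma~\ref{LNSS1}; the plan here is instead to derive it cleanly from the \emph{statement} of Lemma~\ref{LNSS1} by a change of variables, followed by an elementary trigonometric simplification. Throughout set $U:=\cos\varphi_1-\cos\varphi_0\in(0,2]$, and note $U\le\varphi_0-\varphi_1$ since $\cos$ is $1$-Lipschitz.

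First I would substitute $u=\cos\varphi_1-\cos\psi$, so that $\dint u=\sin\psi\,\dint\psi$ and $u$ runs over $[0,U]$ as $\psi$ runs over $[\varphi_1,\varphi_0]$; this rewrites the integral in question as $\int_0^U\exp\{-\arccos^2(\cos\varphi_1-u)/D\}\,u^{\nu-1/2}\,\dint u$. The affine substitution $u=U(1-w)$, together with the identity $\cos\varphi_1-U=\cos\varphi_0$, turns this into
\[
U^{\nu+1/2}\int_0^1\exp\{-\Phi_{A,B}^2(w)/D\}\,(1-w)^{\nu-1/2}\,\dint w,\qquad A:=\cos\varphi_0,\quad B:=U,
\]
where $\Phi_{A,B}$ is as in Lemma~\ref{LNSS1} and $\Phi_{A,B}(1)=\arccos(\cos\varphi_0+U)=\varphi_1$. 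Since $(1+w)^{\nu-1/2}\simeq1$ uniformly for $w\in[0,1]$, the displayed integral is comparable to $U^{\nu+1/2}\int_{[0,1]}\exp\{-\Phi_{A,B}^2(w)/D\}\,\dint\Pi_\nu(w)$.

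Next I would apply Lemma~\ref{LNSS1}, after checking that $A=\cos\varphi_0\in[-1,1-B]$ always holds. If moreover $\varphi_0-\varphi_1\le1$, then $B=U\le1$ and Lemma~\ref{LNSS1} applies directly, giving that the last quantity is comparable to
\[
U^{\nu+1/2}D^{\nu+1/2}\big(U(\pi-\varphi_1)^{-1}+D\big)^{-\nu-1/2}\exp\{-\varphi_1^2/D\}=D^{\nu+1/2}\big((\pi-\varphi_1)^{-1}+DU^{-1}\big)^{-\nu-1/2}\exp\{-\varphi_1^2/D\}.
\]
If $\varphi_0-\varphi_1>1$ (so $B$ may be larger than $1$), I would instead split the $\psi$-integral at $\varphi_1+1$: over $[\varphi_1,\varphi_1+1]$ the same argument applies with $\varphi_0$ replaced by $\varphi_1+1$, and since in this regime $\pi-\varphi_1$, $\varphi_0$, $\varphi_0-\varphi_1$, and $U$ are all comparable to $1$, the outcome is comparable to the displayed expression; over $[\varphi_1+1,\varphi_0]$ one has $\psi^2\ge\varphi_1^2+1$ and $\cos\varphi_1-\cos\psi\simeq1$, so that contribution is at most $C\exp\{-(\varphi_1^2+1)/D\}$, which is again dominated by the displayed expression because $\exp\{-1/D\}$ decays faster than any power of $D$. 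In all cases the integral in question is therefore comparable to $D^{\nu+1/2}\big((\pi-\varphi_1)^{-1}+DU^{-1}\big)^{-\nu-1/2}\exp\{-\varphi_1^2/D\}$.

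It remains to rewrite $U$. From $U=\cos\varphi_1-\cos\varphi_0=2\sin\tfrac{\varphi_0+\varphi_1}{2}\sin\tfrac{\varphi_0-\varphi_1}{2}$, the comparison $\sin\theta\simeq\theta\wedge(\pi-\theta)$ on $[0,\pi]$, the estimates $\tfrac{\varphi_0+\varphi_1}{2}\simeq\varphi_0$ and $\pi-\tfrac{\varphi_0+\varphi_1}{2}\simeq\pi-\varphi_1$, and the observation that $\varphi_0\vee(\pi-\varphi_1)\simeq1$ (because $\varphi_0+(\pi-\varphi_1)\ge\pi$), one obtains $U\simeq(\varphi_0-\varphi_1)\,\varphi_0\,(\pi-\varphi_1)$. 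Substituting this and rearranging yields $\big((\pi-\varphi_1)^{-1}+DU^{-1}\big)^{-\nu-1/2}\simeq(\pi-\varphi_1)^{\nu+1/2}\big(\tfrac{(\varphi_0-\varphi_1)\varphi_0}{(\varphi_0-\varphi_1)\varphi_0+D}\big)^{\nu+1/2}$, which is exactly the asserted right-hand side. I expect the only genuinely delicate point to be the bookkeeping around $B=U=\cos\varphi_1-\cos\varphi_0$ possibly exceeding $1$, i.e.\ stepping slightly outside the hypotheses of Lemma~\ref{LNSS1} -- this is precisely what the splitting at $\varphi_1+1$ takes care of; alternatively, one may note that Lemma~\ref{LNSS1} remains valid (with the same proof) for $B$ in any fixed bounded range. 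All the remaining ingredients -- the comparison $(1+w)^{\nu-1/2}\simeq1$, the trigonometric inequalities above, and the standard two-sided estimates for the incomplete Gamma integrals $\int_0^x e^{-v}v^{\nu-1/2}\,\dint v$ underpinning Lemma~\ref{LNSS1} -- are routine.
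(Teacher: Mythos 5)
Your argument is correct, and it takes a genuinely different (and more self-contained) route than the paper. The paper proves Lemma~\ref{LNSS2} by reaching into the \emph{proof} of \cite[Lemma~2.1]{NSS21}: after the substitution $\cos\psi=A+Bw$ the left-hand side is, up to the factor $D^{\nu+\frac12}(\pi-\varphi_1)^{\nu-\frac12}$, exactly the intermediate quantity $J$ appearing there, and the authors simply quote the chain $J\simeq Q\simeq\dots$ from that proof. You instead run the substitution in the opposite direction ($u=\cos\varphi_1-\cos\psi$, then $u=U(1-w)$) so as to land on the \emph{statement} of Lemma~\ref{LNSS1}, which forces you to confront the constraint $B\le 1$; your splitting at $\varphi_1+1$ handles this correctly (on $[\varphi_1,\varphi_1+1]$ all of $\pi-\varphi_1$, $\varphi_0$, $\varphi_0-\varphi_1$, $U$ are indeed $\simeq 1$, and the tail is dominated because $e^{-1/D}\lesssim (D/(1+D))^{\nu+\frac12}$), as is the final identification $U\simeq(\varphi_0-\varphi_1)\,\varphi_0\,(\pi-\varphi_1)$ via the product formula for $\cos\varphi_1-\cos\varphi_0$. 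What your version buys is independence from the internals of \cite{NSS21}, at the cost of the extra case analysis. One caveat shared with the statement itself: at the endpoint $\nu=-\tfrac12$ the left-hand side diverges (the exponent $\nu-\tfrac12=-1$ is not integrable at $\psi=\varphi_1$) while $\Pi_{-1/2}$ is atomic, so your reduction to $\dint\Pi_\nu$ really lives on $\nu>-\tfrac12$; this is harmless, since the paper only invokes Lemma~\ref{LNSS2} with $\nu=\mu-\tfrac12$ and $\mu>0$.
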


\begin{proof}
	Indeed, according to the notation from the proof of \cite[Lemma~2.1]{NSS21} with $\xi = 0$ our expression is comparable to
	\[
	D^{\nu + \frac{1}{2}} (\pi - \varphi_1)^{\nu-\frac{1}{2}}  J 
	\simeq D^{\nu + \frac{1}{2}} (\pi - \varphi_1)^{\nu-\frac{1}{2}}  Q
	\simeq D^{\nu+\frac{1}{2}} (\pi - \varphi_1)^{\nu+\frac{1}{2}} \big( \tfrac{(\varphi_0 - \varphi_1) \varphi_0}{(\varphi_0 - \varphi_1) \varphi_0 + D} \big)^{\nu + \frac{1}{2}} \exp \big\{ -\tfrac{\varphi_1^2}{D} \big\},
	\]
	as desired.
\end{proof}

\noindent We have also a result describing the behavior of the function $G_{\frac{\tau}{4}}^{\lambda, \lambda}$ (cf.~\cite[(7)]{NSS21}).

\begin{lemma}
	\label{LNSS3}
	Fix $\lambda \in [0, \infty)$. Then
	\begin{equation*}
	G_{\frac{\tau}{4}}^{\lambda, \lambda}(1, \cos \psi) \simeq \tau^{-\lambda-1} ( \tau + \pi - \psi )^{-\lambda - \frac{1}{2}} \exp\big\{-\tfrac{\psi^2}{\tau} \big\}
	\end{equation*}
	uniformly in $\frac{\tau}{4} \in (0, 1]$ and $\psi \in [0, \pi]$.
\end{lemma}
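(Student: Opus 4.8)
The plan is to deduce Lemma~\ref{LNSS3} directly from the genuinely sharp two-sided estimate for the classical Jacobi heat kernel on $[-1,1]$ with equal parameters, evaluated with the first argument at the endpoint~$1$. That estimate was obtained in \cite{NSS21} (with the aid of tools from \cite{NS13}) and is recorded there as \cite[(7)]{NSS21}; up to the harmless substitution $t=\tau/4$, our $G_{\tau/4}^{\lambda,\lambda}(1,\cos\psi)$ is precisely the value at the ``pole'' of the ultraspherical heat kernel $G_t^{\lambda,\lambda}$ treated there. So the lemma amounts to a rescaling of a known bound. First I would quote it: for fixed $\lambda\in[0,\infty)$,
\[
G_t^{\lambda,\lambda}(1,\cos\psi) \simeq t^{-\lambda-1}\,(t+\pi-\psi)^{-\lambda-\frac12}\,\exp\{-\tfrac{\psi^2}{4t}\}
\]
uniformly in $t\in(0,1]$ and $\psi\in[0,\pi]$. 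Here the exponent is the Gaussian $\exp\{-{\rm dist}^2(1,\cos\psi)/(4t)\}$ with ${\rm dist}(1,\cos\psi)=\psi$, the factor $t^{-\lambda-1}$ is the on-diagonal size at the pole, and the polynomial factor $(t+\pi-\psi)^{-\lambda-\frac12}$ records the enhancement of the otherwise exponentially small kernel as $\cos\psi$ approaches the opposite endpoint~$-1$.

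Then I would substitute $t=\tau/4$. The condition $t\in(0,1]$ turns into $\tau/4\in(0,1]$, exactly as in the statement; the Gaussian becomes $\exp\{-\psi^2/\tau\}$; and, since $(\tau/4)^{-\lambda-1}\simeq\tau^{-\lambda-1}$ and $\tau/4+\pi-\psi\simeq\tau+\pi-\psi$ (with constants depending only on $\lambda$), the prefactor becomes $\tau^{-\lambda-1}(\tau+\pi-\psi)^{-\lambda-\frac12}$. This is the asserted comparability.

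The only points needing care — rather than genuine difficulty — are: (i) making sure the cited estimate is applied in the regime it covers, in particular uniformly up to the ``antipodal'' value $\psi=\pi$, which is exactly where the delicate part of \cite{NSS21} lives (controlling the massive cancellation in $\sum_n(-1)^n e^{-t\lambda_n}Z_n^{\lambda+\frac12}(1)$ via a theta/Poisson-summation argument, which is what produces the factor $(t+\pi-\psi)^{-\lambda-\frac12}$); and (ii) matching normalizations — the probability weight $\dint\Pi_{\lambda+\frac12}$, the constant in the Gaussian, and the time scale. None of these is hard. If one insisted on a self-contained proof, the single hard step would be the antipodal lower bound just mentioned; the upper bound and the lower bound for $\psi$ away from $\pi$ follow from standard heat-kernel localization together with the positivity of $G_t^{\lambda,\lambda}$.
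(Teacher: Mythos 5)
Your proposal is correct and coincides with what the paper does: Lemma~\ref{LNSS3} is not proved in the paper but simply quoted from \cite[(7)]{NSS21}, and your rescaling $t=\tau/4$ (turning $\exp\{-\psi^2/(4t)\}$ into $\exp\{-\psi^2/\tau\}$ and noting $\tau/4+\pi-\psi\simeq\tau+\pi-\psi$) is exactly the intended, harmless normalization check. Nothing further is needed.
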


\noindent Finally, we shall use the following elementary estimate (cf.~\cite[(11)]{NSS21}).

\begin{lemma}
	\label{LNSS4}
	Fix $\kappa \in \RR$. Then 
	\begin{equation*}
	(\tau + \pi - \eta)^{\kappa} \exp \big\{ - \tfrac{\eta^2}{\tau} \big\} \lesssim (\tau + \pi - \theta)^{\kappa} \exp \big\{ - \tfrac{\theta^2}{\tau} \big\}
	\end{equation*}
	uniformly in $\tau \in (0, \infty)$ and $\theta, \eta \in [0,\pi]$ with $\theta \leq \eta$.
\end{lemma}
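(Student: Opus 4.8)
The plan is to strip off the Gaussian factors, which already point in the favourable direction, and reduce the lemma to a one-variable elementary estimate handled by a short case split. Since $\theta \le \eta$, we have $\exp\{-\eta^2/\tau\} \le \exp\{-\theta^2/\tau\}$, so the asserted inequality is equivalent to
\[
\Big(\frac{\tau+\pi-\eta}{\tau+\pi-\theta}\Big)^{\kappa} \lesssim \exp\Big\{\frac{\eta^2-\theta^2}{\tau}\Big\}.
\]
If $\kappa \ge 0$ this is immediate: the left-hand side is at most $1$ because $\tau+\pi-\eta \le \tau+\pi-\theta$, while the right-hand side is at least $1$. So I would assume from now on that $\kappa < 0$, set $\mu \coloneqq -\kappa > 0$, and aim to show
\[
\Big(\frac{\tau+\pi-\theta}{\tau+\pi-\eta}\Big)^{\mu} \lesssim \exp\Big\{\frac{\eta^2-\theta^2}{\tau}\Big\}.
\]
Two trivial facts will be used throughout: $a \coloneqq \tau+\pi-\eta \ge \tau > 0$ (here $\eta \le \pi$ is used), and $(\tau+\pi-\theta)/(\tau+\pi-\eta) = 1 + (\eta-\theta)/a$.

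Next I would split according to whether $\eta - \theta \le a$ or $\eta-\theta > a$. In the first case $1 + (\eta-\theta)/a \le 2$, so the left-hand side is at most $2^{\mu}$ and, the right-hand side being $\ge 1$, we are done with constant $2^{\mu}$. In the second case, $\eta-\theta > \tau+\pi-\eta$ forces $2\eta > \theta+\tau+\pi \ge \pi$, hence $\eta > \pi/2$; therefore $\eta+\theta \ge \eta > \pi/2$ and so $\eta^2-\theta^2 = (\eta-\theta)(\eta+\theta) > \tfrac{\pi}{2}(\eta-\theta)$. On the other hand $(\eta-\theta)/a > 1$, so $1 + (\eta-\theta)/a < 2(\eta-\theta)/a \le 2(\eta-\theta)/\tau$ using $a \ge \tau$. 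Writing $z \coloneqq (\eta-\theta)/\tau$, which exceeds $1$ here, we obtain $\big((\tau+\pi-\theta)/(\tau+\pi-\eta)\big)^{\mu} < 2^{\mu}z^{\mu}$ and $\exp\{(\eta^2-\theta^2)/\tau\} > \exp\{\tfrac{\pi}{2}z\}$, so the claim follows from the elementary bound $z^{\mu}\lesssim\exp\{\tfrac{\pi}{2}z\}$ (uniform in $z>0$), which holds because $z^{\mu}\exp\{-\tfrac{\pi}{2}z\}$ is continuous on $(0,\infty)$ and tends to $0$ at both endpoints, hence is bounded. Combining the two cases yields the claim.

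I do not anticipate a real obstacle, since the statement is elementary; the only point that needs attention is the regime $\kappa < 0$, where the power $(\tau+\pi-\eta)^{\kappa}$ works against us and must be absorbed by the Gaussian in $\eta$. The mechanism making this possible is the observation that once $\eta-\theta$ is as large as $\tau+\pi-\eta$, the angle $\eta$ is bounded below by $\pi/2$, so $\eta^2-\theta^2$ is comparable, with an absolute constant, to $\eta-\theta$; this upgrades the Gaussian decay to something that beats any fixed power of $(\eta-\theta)/\tau$. It is worth noting that one should avoid applying $1+x \le e^{x}$ to the ratio globally, as that bound is too lossy when $\tau$ and $\pi-\eta$ are both small; the case split above circumvents this.
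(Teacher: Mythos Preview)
Your argument is correct. The reduction to the ratio inequality is clean, the case $\kappa \ge 0$ is indeed trivial, and for $\kappa < 0$ the split according to whether $\eta-\theta \le a \coloneqq \tau+\pi-\eta$ works exactly as you say: in the second case $\eta-\theta > a \ge \tau$ forces $\eta > \pi/2$, which gives the lower bound $\eta+\theta > \pi/2$ needed to turn the Gaussian gain into exponential decay in $z=(\eta-\theta)/\tau$, and then $z^{\mu} \lesssim e^{\pi z/2}$ finishes. Your closing remark that the naive bound $1+x \le e^{x}$ is insufficient for large $-\kappa$ is also accurate (e.g.\ $\eta=\pi$, $\theta=\pi-\sqrt{\tau}$, $\tau\to 0$ shows the resulting exponent diverges once $-\kappa>2\pi$).

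As for comparison with the paper: the paper does not prove this lemma at all. It merely records the estimate and cites \cite[(11)]{NSS21}, calling it elementary. So there is no ``paper's own proof'' to compare with; your write-up supplies a self-contained justification that the paper outsources.
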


\section{Proofs of the main results}

We are ready to prove Theorems~\ref{T1}~and~\ref{T2}.
\begin{proof}[Proof of Theorem~\ref{T1}]
	Fix $(x, t), (y, s) \in \mathbb{V}^{d+1}$ and $\tau \in (0,4]$. We consider the formula for $h_{\tau} (W_{\mu, \gamma}; (x, t),  (y, s))$ given in Lemma~\ref{LHK1}.
	By Lemma~\ref{LNSS3} 
	the integrand is comparable to
	\begin{align*}
	\tau^{-2 \alpha -\gamma - \frac{3}{2}}  
	\big( \tau + \pi - \arccos \xi(u, v_1, v_2) \big)^{-2 \alpha - \gamma - 1}
	\exp \big\{ -\tfrac{\arccos^2 \xi(u, v_1, v_2)}{\tau} \big\}, 
	\end{align*}
	where we abbreviate $\xi(x, t, y, s; u, v_1, v_2)$ to $\xi(u, v_1, v_2)$.
	Next, observe that the region of integration $[-1,1]^3$ can be replaced by $[0, 1]^3$ without changing the size of the outcome. Indeed, for given $u, v_1, v_2 \in [0, 1]$, among the points $(\pm u, \pm v_1, \pm v_2)$ it is $(+u, +v_1, +v_2)$ where the smallest value of $\arccos \xi(\pm u, \pm v_1, \pm v_2)$ is attained. Hence, the value of the integrand at this point dominates the remaining ones by Lemma~\ref{LNSS4}.
	Also, we have $\xi(+ u, + v_1, + v_2) \geq 0$ which gives $\tau + \pi - \arccos \xi(+ u, + v_1, + v_2) \simeq 1$. Consequently, 
	\begin{equation*}
	h_{\tau} \big(W_{\mu,\gamma};(x, t),  (y, s) \big) \simeq \tau^{-2 \alpha -\gamma - \frac{3}{2}} \int \limits_{[0, 1]^3}  \exp \big\{ -\tfrac{\arccos^2 \xi(u, v_1, v_2)}{\tau} \big\}
	\, \dint \Pi_{\mu - \frac{1}{2}}(u) \dint \Pi_{\alpha - \frac{1}{2}}(v_1) \dint \Pi_{\gamma}(v_2).
	\end{equation*}
	It remains to estimate the integral above. In order to do so, we will repeatedly use Lemma~\ref{LNSS1} or its variant Lemma~\ref{LNSS2}. 
	Firstly, we use Lemma~\ref{LNSS1} 
	for the variable $v_2$, taking
	\begin{equation*}
	\nu = \gamma,
	\quad
	A = v_1 \sqrt{\tfrac{1}{2} \big( st + \langle x, y \rangle + \sqrt{t^2 - \norm{x}^2} \sqrt{s^2 - \norm{y}^2} u\big)},
	\quad
	B = \sqrt{1 - t} \sqrt{1 - s},
	\quad
	D=\tau.
	\end{equation*}
	Since $\pi - \Phi_{A,B}(1) = \pi - \arccos \xi(u, v_1, 1) \simeq 1$, we obtain that the integral is comparable to 
	\begin{align*}
	\tau^{\gamma + \frac{1}{2}} \int \limits_{[0, 1]^2} \big( \sqrt{1-t}\sqrt{1-s} + \tau \big)^{-\gamma - \frac{1}{2}}
	 \exp \big\{ -\tfrac{\arccos^2 \xi(u, v_1, 1)}{\tau} \big\} \, 
	 \dint \Pi_{\mu - \frac{1}{2}}(u) \dint \Pi_{\alpha - \frac{1}{2}}(v_1).
	\end{align*}
	Now we can extract the factor $\tau^{\gamma + \frac{1}{2}} ( \sqrt{1-t}\sqrt{1-s} \vee \tau )^{-\gamma - \frac{1}{2}}$ and focus on the remaining part of the formula. 
	We apply Lemma~\ref{LNSS1} again, this time for the variable $v_1$, taking  
	\begin{equation*}
	\nu = \alpha - \tfrac{1}{2},
	\quad
	A = \sqrt{1-t} \sqrt{1-s},
	\quad
	B = \sqrt{\tfrac{1}{2} \big( st + \langle x, y \rangle + \sqrt{t^2 - \norm{x}^2} \sqrt{s^2 - \norm{y}^2} u\big)},
	\quad
	D = \tau.
	\end{equation*}
	As before, $\pi - \Phi_{A,B}(1) = \pi - \arccos \xi(u, 1, 1) \simeq 1$ so the resulting expression is
	\begin{align*}
	\tau^{\alpha} \int \limits_{[0, 1]} \Big( \sqrt{\tfrac{1}{2} \big( st + \langle x, y \rangle + \sqrt{t^2 - \norm{x}^2} \sqrt{s^2 - \norm{y}^2} u \big)} + \tau \Big)^{-\alpha}
	\exp \big\{ -\tfrac{\arccos^2 \xi(u, 1, 1)}{\tau} \big\} \, \dint \Pi_{\mu - \frac{1}{2}}(u). 
	\end{align*}
	We now claim that $st + \langle x, y \rangle \ge \sqrt{t^2 - \norm{x}^2} \sqrt{s^2 - \norm{y}^2}$. Indeed, by the Cauchy--Schwarz inequality we have $st + \langle x, y \rangle \ge st - \norm{x} \norm{y} \ge 0$, while the remaining numerical inequality $(st - \norm{x} \norm{y})^2 \ge (t^2 - \norm{x}^2)(s^2 - \norm{y}^2)$ is easy to check. Thanks to that   
	\begin{equation*}
	\sqrt{\tfrac{1}{2} \big( st + \langle x, y \rangle + \sqrt{t^2 - \norm{x}^2} \sqrt{s^2 - \norm{y}^2} u \big)} \simeq \sqrt{st + \langle x, y \rangle}
	\end{equation*}
	and we are left with the expression	
	\begin{equation*}
	\tau^\alpha ( \sqrt{st + \langle x, y \rangle} \vee \tau)^{-\alpha} \int \limits_{[0, 1]} \exp \big \{ -\tfrac{\arccos^2 \xi(u, 1, 1) }{\tau} \big \} \, \dint \Pi_{\mu - \frac{1}{2}}(u).
	\end{equation*}
	Again, it suffices to focus on the integral above.	
	This time we use Lemma~\ref{LNSS2}. Set  
	\[ 
	A = \tfrac{1}{2} \big(st + \langle x, y \rangle \big), \quad
	B = \tfrac{1}{2} \sqrt{t^2 - \norm{x}^2} \sqrt{s^2 - \norm{y}^2}, \quad
	D = \tau, \quad
	E = \sqrt{1-t} \sqrt{1-s},
	\]
	and note that $0 \le B \le A, \, 0 \le E, \, \sqrt{A + B} + E \le 1$. Assume that $\mu, B > 0$. Since 
	$
	1 - u^2 = (1-u)(1+u) \simeq 1-u
	$
	for $u \in [0, 1]$, our task comes down to estimating the integral	
	\begin{equation*}
	\int \limits_{[0, 1]} \exp \big\{ -\tfrac{\arccos^2 (\sqrt{A + Bu} + E )}{D} \big\} \, (1-u)^{\mu-1}  \dint u.
	\end{equation*}
	Observe that substituting $\sqrt{A + Bu} + E = \cos \psi$ we obtain 
	$
	u = \tfrac{1}{B} ( (\cos \psi - E)^2 - A )
	$ 
	and
	$\dint u = \tfrac{2}{B} (\cos \psi - E) (- \sin \psi) \, \dint \psi$.
	Consequently, the integral can be written as
	\begin{align*}
	\int \limits_{\varphi_0}^{\varphi_1} \exp \big\{ - \tfrac{\psi^2}{D} \big\} \Big( 1 - \tfrac{1}{B} \big( (\cos \psi - E)^2 - A \big) \Big)^{\mu - 1} \tfrac{2}{B} (\cos \psi - E) (- \sin \psi) \, \dint \psi,
	\end{align*}
	where $\varphi_0, \varphi_1 \in [0, \frac{\pi}{2}]$ are such that 
	$
	\cos \varphi_0 = \sqrt{A} + E
	$ 
	and 
	$\cos \varphi_1 = \sqrt{A + B} + E$.
	Note that 
	\[
	1 - \tfrac{1}{B} \big( (\cos \psi - E)^2 - A \big) = \tfrac{1}{B} \big(A + B - (\cos \psi - E)^2 \big) = \tfrac{1}{B} \big( (\cos \varphi_1 - E)^2 - (\cos \psi - E)^2 \big)
	\]
	and one can rewrite the last expression as
	\begin{equation*}
	 \tfrac{1}{B} \big( (\cos \varphi_1 - E) + (\cos \psi - E) \big) ( \cos \varphi_1 - \cos \psi ).
	\end{equation*}
	Since  
	$(\cos \varphi_1 - E) + (\cos \psi - E) \simeq \cos \varphi_1 - E = \sqrt{A}$, 
	the considerations above lead to
	\begin{align*}
	\big( \tfrac{\sqrt{A}}{B}  \big)^{\mu} \int \limits_{\varphi_1}^{\varphi_0} \exp \big\{ - \tfrac{\psi^2}{D} \big\} ( \cos \varphi_1 - \cos\psi )^{\mu - 1}  \sin \psi  \, \dint \psi.
	\end{align*}
	Taking $\nu = \mu - \frac{1}{2}$ in Lemma~\ref{LNSS2}, we see that this quantity is comparable to
	\begin{align*}
	\big( \tfrac{\sqrt{A}}{B} \big)^{\mu} D^{\mu} (\pi - \varphi_1)^{\mu} \big( \tfrac{(\varphi_0 - \varphi_1) \varphi_0}{(\varphi_0 - \varphi_1) \varphi_0 + D} \big)^{\mu} \exp \big\{ -\tfrac{\varphi_1^2}{D} \big\}.
	\end{align*}
	Since $\pi - \varphi_1 \simeq 1$ and 
	$(\varphi_0 - \varphi_1) \varphi_0 \simeq \cos\varphi_1 - \cos \varphi_0 = \sqrt{A + B} - \sqrt{A} \simeq \frac{B}{ \sqrt{A}}$, this turns into
	\begin{align*}
 	D^{\mu} \big( \tfrac{B}{\sqrt{A}} + D \big)^{-\mu} \exp \big\{ -\tfrac{\arccos^2 (\sqrt{A + B} + E)}{D} \big\}.
	\end{align*}
	One can easily check that if $\mu = 0$ or $B = 0$, then we end up with the same expression. Finally, combining all the previous estimates, we conclude that the heat kernel is of the size
	\begin{align*}
	\tau^{-\alpha- 1}
	\big( \sqrt{1-t}\sqrt{1-s} \vee \tau \big) ^{-\gamma - \frac{1}{2}}
	\big( \sqrt{st + \langle x, y \rangle} \vee \tau \big)^{-\alpha}
	D^{\mu} \big( \tfrac{B}{\sqrt{A}} + D \big)^{-\mu} \exp \big\{ -\tfrac{\arccos^2 (\sqrt{A + B} + E)}{D} \big\}
	\end{align*}
	and expanding $A, B, D, E$ completes the proof.
\end{proof}

\begin{proof}[Proof of Theorem~\ref{T2}]
	Fix $(x, t), (y, s) \in \mathbb{V}^{d+1}_0$ and $\tau \in (0,4]$.
	We consider the formula for $h_{\tau} (\varphi_{\gamma}; (x, t), (y, s))$ given in Lemma~\ref{LHK2}. 
	By Lemma~\ref{LNSS3} 
	the integrand is comparable to
	\begin{align*}
	\tau^{-\gamma - d + \frac{1}{2}}  
	\big( \tau + \pi - \arccos \xi(v_1, v_2) \big)^{- \gamma - d + 1}
	\exp \big\{ -\tfrac{\arccos^2 \xi(v_1, v_2)}{\tau} \big\}, 
	\end{align*}
	where we abbreviate $\xi(x, t, y, s; v_1, v_2)$ to $\xi(v_1, v_2)$.
	As before, thanks to Lemma~\ref{LNSS4} the region of integration $[-1,1]^2$ can be replaced by $[0, 1]^2$.
	Also, if $v_1, v_2 \in [0,1]$, then $\xi(v_1, v_2) \ge 0$ which gives $\tau + \pi - \arccos \xi(v_1, v_2) \simeq 1$. Consequently, 
	\begin{equation*}
	h_{\tau} \big(\varphi_{\gamma};(x, t),  (y, s) \big) \simeq \tau^{-\gamma - d + \frac{1}{2}} \int \limits_{[0, 1]^2}  \exp \big\{ -\tfrac{\arccos^2 \xi(v_1, v_2)}{\tau} \big\}
	\, \dint \Pi_{\frac{d-3}{2}}(v_1) \dint \Pi_{\gamma}(v_2).
	\end{equation*}
	To estimate the integral above, we use Lemma~\ref{LNSS1} for the variable $v_2$, taking
	\begin{equation*}
	\nu = \gamma,
	\quad
	A = v_1 \sqrt{ \tfrac{1}{2} ( st + \langle x, y \rangle ) },
	\quad
	B = \sqrt{1-t} \sqrt{1-s},
	\quad
	D = \tau.
	\end{equation*}
	Since $\pi - \Phi_{A,B}(1) = \pi - \arccos \xi(v_1, 1) \simeq 1$, we obtain that the integral is comparable to 
	\begin{align*}
	\tau^{\gamma + \frac{1}{2}} \int \limits_{[0, 1]} \big( \sqrt{1-t}\sqrt{1-s} + \tau \big)^{-\gamma - \frac{1}{2}}
	\exp \big\{ -\tfrac{\arccos^2 \xi(v_1, 1)}{\tau} \big\} 
	\, \dint \Pi_{\frac{d-3}{2}}(v_1).
	\end{align*}
	We can extract the factor $\tau^{\gamma + \frac{1}{2}} ( \sqrt{1-t}\sqrt{1-s} \vee \tau )^{-\gamma - \frac{1}{2}}$ and focus on the remaining part of the formula. 
	We apply Lemma~\ref{LNSS1} again, this time for the variable $v_1$, taking  
	\begin{equation*}
	\nu = \tfrac{d-3}{2},
	\quad
	A = \sqrt{1-t} \sqrt{1-s},
	\quad
	B = \sqrt{\tfrac{1}{2} \big( st + \langle x, y \rangle \big)},
	\quad
	D = \tau.
	\end{equation*}
	As before, $\pi - \Phi_{A,B}(1) = \pi - \arccos \xi(1, 1) \simeq 1$ so the resulting expression is
	\begin{align*}
	\tau^{\frac{d}{2}-1} \Big( \sqrt{\tfrac{1}{2} \big( st + \langle x, y \rangle \big)} + \tau \Big)^{-\frac{d}{2}+1}
	\exp \big\{ -\tfrac{\arccos^2 \xi(1, 1)}{\tau} \big\}. 
	\end{align*} 
	Combining all the previous estimates completes the proof.
\end{proof}

\end{document}